\documentclass[reqno]{amsart}

\usepackage{amssymb,amscd}

\theoremstyle{plain}

\newtheorem{thm}{Theorem}[section]
\newtheorem{lem}[thm]{Lemma}
\newtheorem{cor}[thm]{Corollary}

\theoremstyle{definition}

\theoremstyle{remark}

\newtheorem{claim}{Claim}

\newcommand{\Z}{\mathbb{Z}}
\newcommand{\R}{\mathbb{R}}

\newcommand{\N}{\mathbb{N}}

\newcommand{\KK}{\mathcal{K}}
\newcommand{\LL}{\mathcal{L}}

\renewcommand{\SS}{\mathcal{S}}

\newcommand{\length}{\operatorname{length}}

\newcommand{\Cinf}{C^\infty}

\title[Generalized Hermite polynomials]{Application of the method of Bonan-Clark to the generalized Hermite polynomials}

\author[J.A. \'Alvarez L\'opez]{Jes\'us A. \'Alvarez L\'opez}
\address{Departamento de Matem\'aticas\\
         Facultade de Matem\'aticas\\
         Universidade de Santiago de Compostela\\
         15782 Santiago de Compostela\\ Spain}
\email{jesus.alvarez@usc.es}
\thanks{The first author is partially supported by MICINN, Grant MTM2014-56950-P, and by Xunta de Galicia, Consolidaci\'on e estructuraci\'on 2015 GPC GI-1574.}

\author[M. Calaza]{Manuel Calaza}
\address{Laboratorio de Investigaci\'on 10, IDIS, Fundaci\'on IDICHUS, Hospital Cl\'inico Universitario, 15706 Santiago de Compostela, Spain}
\email{manuel.calaza@usc.es}

\date{}

\subjclass{33C45; 41A10}

\keywords{Asymptotic estimates; generalized Hermite polynomials; Laguerre polynomials}

\begin{document}

\maketitle

\begin{abstract}
  It is shown that the method of Bonan-Clark can be used to prove asymptotic estimates for the generalized Hermite polynomials, showing also that these estimates are optimal.
\end{abstract}


\section{Introduction} \label{s:intro}

Let $p_k$ be the sequence of orthogonal polynomials for the measure $e^{-sx^2}|x|^{2\sigma}\,dx$, taken with norm one and positive leading coefficient. Up to normalization, these are the generalized Hermite polynomials~\cite[p.~380, Problem~25]{Szego1975}; see also \cite{Chihara1955,DickinsonWarsi1963,DuttaChatterjeaMore1975,Chihara1978,Rosenblum1994,Rosler1998}. Let $x_{k,k}<x_{k,k-1}<\dots<x_{k,1}$ denote the roots of each $p_k$; in particular, $x_{k,k/2}$ is the smallest positive root if $k$ is even. The generalized Hermite functions $\phi_k=p_ke^{-sx^2/2}$ are the eigenfunctions of the Dunkl harmonic oscillator on $\R$ \cite{Rosenblum1994}. 

Asymptotic estimates for the functions $\phi_k$, or the polynomials $p_k$, can be obtained by using that the generalized Hermite polynomials can be expressed by the Laguerre ones (see e.g.\ \cite[p.~525]{Rosler1998} or \cite[p.~23]{Rosler2003}), which have asymptotic estimates \cite{Erdelyi1960,AskeyWainger1965,Muckenhoupt1970a-II,Muckenhoupt1970b}. This procedure is indicated in Section~\ref{ss:Laguerre}. 

We show that method of Bonan-Clark, used in \cite{BonanClark1990} for the Hermite functions, can be also applied to the functions $\xi_k=|x|^\sigma\phi_k$. They satisfy the equation $\xi_k''+q_k\xi_k=0$, where $q_k=(2k+1+2\sigma)s-s^2x^2-\bar\sigma_kx^{-2}$ with $\bar\sigma_k=\sigma(\sigma-(-1)^k)$. The corresponding oscillation region $\widehat{I}_k=q^{-1}(\R_+)$ is of the form: $(-b_k,-a_k)\cup(a_k,b_k)$ if $\bar\sigma_k>0$ (for $k>0$), $(-b_k,b_k)$ if  $\bar\sigma_k=0$, or $(-b_k,0)\cup(0,b_k)$ if  $\bar\sigma_k<0$, where $b_k\in O(k^{1/2})$ and $a_k\in O(k^{-1/2})$ as $k\to\infty$. If $\bar\sigma_k\ge0$, then set $\widehat{J}_k=\widehat{I}_k$. When $\bar\sigma_k<0$ and $k$ is large enough, the equation $q_k(b)=4\pi/b^2$ has two positive solutions, $b_{k,+}<b_{k,-}$, with $b_{k,+}\in O(k^{-1/2})$; in this case, set $\widehat{J}_k=(-b_k,-b_{k,+}]\cup[b_{k,+},b_k)$.
  
  \begin{thm}\label{t:upper estimates of xi k}
  There exist $C,C',C''>0$, depending on $\sigma$ and $s$, so that, for $k\ge1$:
    \begin{itemize}

      \item[(i)] $\xi_k^2(x)\le C/\sqrt{q_k(x)}$ for all $x\in\widehat{J}_k$\;;

      \item[(ii)] if $k$ is odd or $\sigma\ge0$, then $\xi_k^2(x)\le C'k^{-1/6}$ for all $x\in\R$\;; and

      \item[(iii)] if $k$ is even and $\sigma<0$, then $\xi_k^2(x)\le C''k^{-1/6}$ for $|x|\ge x_{k,k/2}$.

    \end{itemize}
\end{thm}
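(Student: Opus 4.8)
On $\R_+$ the function $\xi_k^2$ is even, so it suffices to argue for $x>0$. The backbone of the proof is the Sonin-type invariant
\[
F_k=\xi_k^2+\frac{(\xi_k')^2}{q_k},
\]
defined on the oscillation region $q_k>0$. Using $\xi_k''=-q_k\xi_k$ one computes $F_k'=-\frac{q_k'}{q_k^2}(\xi_k')^2$, so that $\xi_k^2\le F_k$ everywhere, $F_k$ is decreasing where $q_k$ increases and increasing where $q_k$ decreases, and, writing $\theta=\frac{(\xi_k')^2/q_k}{q_kF_k}\in[0,1]$,
\[
\Bigl(\log\bigl(F_k\,q_k^{1/2}\bigr)\Bigr)'=\frac{q_k'}{q_k}\Bigl(\tfrac12-\theta\Bigr),\qquad
\Bigl|\bigl(\log\bigl(F_k\,q_k^{1/2}\bigr)\bigr)'\Bigr|\le\tfrac12\Bigl|\tfrac{q_k'}{q_k}\Bigr|.
\]
Thus on any subinterval where $q_k$ stays between two comparable values, $F_kq_k^{1/2}$ varies by a bounded factor; since $q_k$ on $\R_+$ has at most one interior critical point (a maximum when $\bar\sigma_k>0$, and is strictly decreasing when $\bar\sigma_k\le0$), this controls the propagation of the estimate across the whole of $\widehat{J}_k$ once I fix the size of $F_kq_k^{1/2}$ at one reference level.

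To prove (i) I would first fix that reference level in the bulk using the normalization $\int_\R\xi_k^2\,dx=\int_\R p_k^2e^{-sx^2}|x|^{2\sigma}\,dx=1$. Over any interval between consecutive zeros of $\xi_k$, integration by parts gives $\int(\xi_k')^2=\int q_k\xi_k^2$, so where $q_k$ is slowly varying (the bulk, $q_k\gtrsim\delta\,q_k^{\max}$) one has the equipartition $\int F_k\asymp 2\int\xi_k^2$. Combining this with the elementary fact that $\int_{a_k}^{b_k}q_k^{-1/2}\,dx$ is bounded independently of $k$ (since $q_k\approx s^2(b_k^2-x^2)$ there, giving $\tfrac{\pi}{2s}$ in the limit) forces the bulk value of $F_kq_k^{1/2}$ to be $O(1)$ uniformly in $k$. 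The logarithmic bound above then carries this to all of the oscillatory bulk, yielding $\xi_k^2\le F_k\le C\,q_k^{-1/2}$ there. It remains to reach the inner endpoint: when $\bar\sigma_k<0$ the point $b_{k,+}$ is precisely the place where $q_k$ has dropped to the scale $4\pi/b^2$, i.e.\ the exit of the singular inner zone in which $q_k\approx-\bar\sigma_kx^{-2}$; the calibrating constant is chosen exactly so that the monotonicity of $F_k$ together with this explicit model for $q_k$ closes the estimate down to $[b_{k,+},b_k)$.

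For (ii) and (iii) the uniform $k^{-1/6}$ bound is governed by the outer turning point $b_k\in O(k^{1/2})$, where the Sonin argument degenerates. There $|q_k'(b_k)|\sim 2s^2 b_k\sim k^{1/2}$, and a uniform Airy (Liouville--Green with one simple turning point) comparison, matched to the bulk profile $\xi_k^2\asymp C\,q_k^{-1/2}$, gives $\xi_k^2\lesssim|q_k'(b_k)|^{-1/3}\sim k^{-1/6}$ near $b_k$, a value that dominates the bulk maximum $\sim k^{-1/2}$ (where $q_k\sim k$). In the classically forbidden region $x>b_k$ one has $q_k<0$, the recessive $L^2$-solution $\xi_k^2$ is eventually decreasing, and so the turning-point bound propagates outward. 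This already yields (ii): when $k$ is odd or $\sigma\ge0$ there is no interior singularity ($\xi_k\to0$ at $0$), the inner forbidden region $(0,a_k)$ is handled by the same decay argument, and the global maximum is the turning-point value $\le C'k^{-1/6}$. When $k$ is even and $\sigma<0$, $\xi_k\sim|x|^\sigma\to\infty$ at $0$, so no global bound holds; restricting to $|x|\ge x_{k,k/2}$ (the innermost positive zero of $\xi_k$, of order $k^{-1/2}$) excises the singular inner hump and leaves exactly the region $[x_{k,k/2},b_k)$ plus the outer forbidden tail, where (i) and the turning-point estimate give (iii).

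The \textbf{main obstacle} is the sharp turning-point analysis: away from $b_k$ the invariant $F_k$ and the $L^2$-normalization do the work cleanly, but at $b_k$ the quantity $F_kq_k^{1/2}$ is no longer slowly varying, and extracting the precise exponent $k^{-1/6}$ requires a genuinely uniform-in-$k$ Airy comparison (or a purpose-built monotone quantity adapted to the linearization of $q_k$ at $b_k$). A secondary difficulty, entirely specific to the Dunkl setting, is the interface near the singular point $0$: one must match the explicit singular model $q_k\approx-\bar\sigma_kx^{-2}$, the cutoff $b_{k,+}$, and the location of $x_{k,k/2}$, and verify that all implied constants are controlled by the asymptotics $b_k\in O(k^{1/2})$ and $a_k,b_{k,+}\in O(k^{-1/2})$.
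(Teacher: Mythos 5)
Your strategy (Sonin invariant $F_k=\xi_k^2+(\xi_k')^2/q_k$, calibration by the $L^2$ normalization, then an Airy-type matching at the turning point) is genuinely different from the paper's, but as written it has a real gap exactly where you flag the ``main obstacle'': the uniform turning-point estimate $\xi_k^2\lesssim |q_k'(b_k)|^{-1/3}\sim k^{-1/6}$ is asserted, not proved. That estimate \emph{is} the content of parts (ii) and (iii) --- the bulk bound $F_k\lesssim q_k^{-1/2}\sim k^{-1/2}$ is strictly smaller, so the global maximum lives precisely in the region where your logarithmic bound $\bigl|\bigl(\log(F_kq_k^{1/2})\bigr)'\bigr|\le\frac12|q_k'/q_k|$ degenerates: integrating it from a bulk point $x_0$ to $x$ near $b_k$ only gives $F_k(x)\le F_k(x_0)\,q_k(x_0)/q_k(x)$, which blows up as $q_k(x)\to0$ and does not even recover (i) near the outer endpoint without first knowing (ii). A genuinely uniform-in-$k$ Liouville--Green/Airy comparison with explicit error control (and, in the Dunkl case, with the extra singular term $\bar\sigma_kx^{-2}$ in $q_k$) would be a substantial piece of analysis that the proposal does not supply. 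The bulk equipartition step for (i) is also only heuristic as stated (it needs $q_k$ to be slowly varying on each gap between consecutive zeros, and a mechanism to pass from the summed bound $\sum_iF_k(x_i)q_k(x_i)^{-1/2}\lesssim1$ to a pointwise one), though that part looks repairable.

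For comparison, the paper avoids turning-point asymptotics entirely by following Bonan--Clark. Two ingredients replace your Airy matching: (a) a Sturm comparison against $\sin(\sqrt{cs}\,x)$ on the sublevel sets $q_k\ge cs$, combined with the elementary fact that $q_k\ge Ck^{1/3}$ off boundary layers of width $O(k^{-1/6})$, shows that every point of an oscillation interval is within $O(k^{-1/6})$ of a zero $x_{k,i}$ (and within $C_1/\sqrt{q_k(x)}$ on the subinterval $\widehat J_k$); and (b) the concavity of $|\xi_k|$ between a zero and the point, together with the Cauchy--Schwarz inequality, the Gauss--Jacobi quadrature formula and the identity ${p_k'}^2(x_{k,i})\lambda_{k,i}=2s$ (or $2s/(1+2\sigma)$), gives the pointwise bound $\xi_k^2(x)\le\frac{8s}{3}|x-x_{k,i}|$. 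Parts (ii) and (iii) then follow because the maximum of $\xi_k^2$ (on $\R$, resp.\ on $|x|\ge x_{k,k/2}$) is attained at a critical point, which lies in the oscillation region. If you want to pursue your route you would need to actually carry out the uniform Airy analysis near $b_k$ (and check the matching across the singular inner zone when $\bar\sigma_k<0$); otherwise the Bonan--Clark mechanism is the shortcut that makes the $k^{-1/6}$ exponent come out with no asymptotic analysis at all.
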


In the case of Theorem~\ref{t:upper estimates of xi k}-(iii), the estimate of $\xi_k$ cannot be extended to $\R\setminus\{0\}$ because these functions are unbounded near zero. Therefore some condition of the type $|x|\ge x_{k,k/2}$ must be assumed; the meaning of this condition is clarified by pointing out that $x_{k,k/2}\in O(k^{-1/2})$ as $k\to\infty$. This weakness is complemented by the following result.

\begin{thm}\label{t:upper estimates of phi k for k even and sigma<0}
  Suppose that $\sigma<0$. There exist $C'''>0$, depending on $\sigma$ and $s$, such that $\phi_k^2(x)\le C'''$ for all $k$ and $|x|\le1$.
\end{thm}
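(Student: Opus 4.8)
The plan is to deduce everything from Theorem~\ref{t:upper estimates of xi k} by writing $\phi_k=|x|^{-\sigma}\xi_k$, so that $\phi_k^2(x)=|x|^{-2\sigma}\xi_k^2(x)=|x|^{2|\sigma|}\xi_k^2(x)$ for $x\neq0$; since $k$ is even, $\phi_k$ is even and it suffices to argue for $x\ge0$. The weight $|x|^{2|\sigma|}$ has two competing effects: it vanishes at the origin, which should absorb the blow-up of $\xi_k$ produced by the term $-\bar\sigma_kx^{-2}$ (here $\bar\sigma_k=\sigma(\sigma-1)>0$), and it grows at infinity, which is the true source of difficulty. Accordingly I would split $[0,\infty)$ into the near-origin interval $[0,x_{k,k/2}]$ and its complement $[x_{k,k/2},\infty)$, the latter being exactly the range on which Theorem~\ref{t:upper estimates of xi k}-(iii) applies.

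On $[0,x_{k,k/2}]$ the function $\phi_k$ is smooth, even and one-signed, since $x_{k,k/2}$ is its first positive zero. The mechanism is that near $0$ one has $\xi_k(x)\sim c_k|x|^{\sigma}$ with $c_k=\phi_k(0)$, so that $|x|^{2|\sigma|}\xi_k^2\to\phi_k(0)^2$: the weight cancels the singularity exactly and the whole interval is governed by $\phi_k(0)^2$. To make this uniform I would control the oscillation of $\phi_k$ using the energy $F_k=\phi_k^2+(\phi_k')^2/Q_k$ attached to the regular equation $\phi_k''+\frac{2\sigma}{x}\phi_k'+Q_k\phi_k=0$ on $(0,\infty)$, where $Q_k=(2k+1+2\sigma)s-s^2x^2$; a short computation gives $F_k'=-(\phi_k')^2\big(\frac{4\sigma}{xQ_k}+\frac{Q_k'}{Q_k^2}\big)$, which has a definite sign because $\sigma<0$ and $Q_k'<0$, and together with $x_{k,k/2}\in O(k^{-1/2})$ this confines $\phi_k$ on the interval. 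It then remains only to bound $\phi_k(0)^2$ uniformly in $k$; this is explicit, since $\phi_k(0)$ is the value at $0$ of a normalized generalized Hermite function and, through the corresponding Laguerre polynomial at $0$, is $O(k^{\sigma-1/2})\to0$.

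On $[x_{k,k/2},\infty)$ I would start from Theorem~\ref{t:upper estimates of xi k}-(iii), giving $\phi_k^2(x)\le C''|x|^{2|\sigma|}k^{-1/6}$, which already settles $x_{k,k/2}\le x\le1$, where $|x|^{2|\sigma|}\le1$. For larger $x$ I would combine Theorem~\ref{t:upper estimates of xi k}-(i), which yields $\phi_k^2(x)\le C|x|^{2|\sigma|}/\sqrt{q_k(x)}$ on the oscillation part $\widehat{J}_k$ and shows that in the bulk the factor $1/\sqrt{q_k}\sim k^{-1/2}$ beats the polynomial weight (the product being $O(k^{|\sigma|-1/2})$), with the rapid decay of $\xi_k$ in the forbidden region $x>b_k$. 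The step I expect to be the main obstacle is the neighbourhood of the outer turning point $x\sim b_k\in O(k^{1/2})$: there the weight $|x|^{2|\sigma|}$ is as large as $k^{|\sigma|}$, while $\xi_k^2$ is of Airy size and is \emph{not} small, so neither (i) — whose bound degenerates as $q_k\to0$ — nor the uniform estimate (iii) applied crudely is obviously enough. Controlling $|x|^{2|\sigma|}\xi_k^2$ precisely at $b_k$, by feeding the sharp Airy-scale behaviour of $\xi_k$ near the turning point into the weight, is the crux on which the uniform constant $C'''$ ultimately depends.
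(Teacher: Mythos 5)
Your decomposition at $x_{k,k/2}$, and your treatment of the range $x_{k,k/2}\le|x|\le1$ (where $|x|^{2|\sigma|}\le1$ turns Theorem~\ref{t:upper estimates of xi k}-(iii) into the desired bound), are exactly the paper's argument. On $[0,x_{k,k/2}]$ the paper is more elementary than your energy function: $\pm x_{k,k/2}$ are consecutive zeros of $p_k$, so $0$ is the only critical point of $p_k$ between them, whence $p_k^2(x)\le p_k^2(0)<p_0^2$ there by~\eqref{p k(0)}, and $\phi_k^2=p_k^2e^{-sx^2}<p_0^2$. Beware that your $F_k$ is \emph{increasing} on $(0,x_{k,k/2})$ (both $\frac{4\sigma}{xQ_k}$ and $\frac{Q_k'}{Q_k^2}$ are negative for $\sigma<0$ and $Q_k>0$), so it bounds $\phi_k^2$ by $F_k(x_{k,k/2})={\phi_k'}^2(x_{k,k/2})/Q_k(x_{k,k/2})$, not by $\phi_k^2(0)$; as written that step does not deliver what you claim, although the conclusion is correct and follows at once from the monotonicity just mentioned.

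The genuine gap is the one you flag and do not close: the region $|x|\ge1$, specifically a neighbourhood of the outer turning point $b_k\in O(k^{1/2})$. You should know that the paper's own proof stops at exactly the same place: it bounds $\phi_k^2$ for $|x|\le x_{k,k/2}$ and for $x_{k,k/2}\le|x|\le1$, and says nothing about $|x|\ge1$. Your size count there is correct and shows the obstacle is not removable by sharper estimates: since $p_{2n}(x)$ is proportional to $L_n^{(\sigma-1/2)}(sx^2)$, the Plancherel--Rotach (Airy) asymptotics at the upper turning point give $\xi_k^2(x)\asymp k^{-1/6}$ for $x$ near $b_k$ (consistently with Theorems~\ref{t:upper estimates of xi k} and~\ref{t:lower estimates of max xi k^2}, which place the maximum $\asymp k^{-1/6}$ of $\xi_k^2$ outside the bulk, where $\xi_k^2\lesssim q_k^{-1/2}\in O(k^{-1/2})$), hence $\phi_k^2(x)=|x|^{2|\sigma|}\xi_k^2(x)\asymp k^{|\sigma|-1/6}$ there. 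This is unbounded for $-1/2<\sigma<-1/6$, so the statement as written for all $x\in\R$ appears to fail in that range; it does hold for all $\sigma\in(-1/2,0)$ on $|x|\le1$, which is all that is needed to complement Theorem~\ref{t:upper estimates of xi k}-(iii) near the origin, and it holds on all of $\R$ when $-1/6\le\sigma<0$. In short, your instinct that the turning point is the crux is right, but the way out is not a finer Airy analysis: it is to restrict the claim to $|x|\le1$ (as the paper's proof implicitly does) or to $\sigma\ge-1/6$.
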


Even though Theorems~\ref{t:upper estimates of xi k} and~\ref{t:upper estimates of phi k for k even and sigma<0} easily follow from the known asymptotic estimates of Laguerre polynomials, we think that the given adaptation of the method of Bonan-Clark has its own interest. Moreover this method continues with the proof of the following theorem asserting that the estimates of Theorem~\ref{t:upper estimates of xi k}-(ii),(iii) are optimal.

\begin{thm}\label{t:lower estimates of max xi k^2}
  There are $C^{(4)},C^{(5)}>0$, depending on $\sigma$ and $s$, so that, for $k\ge1$:
    \begin{itemize}

      \item[(i)] $\max_{x\in\R}\xi_k^2(x)\ge C^{(4)}k^{-1/6}$; and,

      \item[(ii)] if $k$ is even and $\sigma<0$, then $\max_{|x|\ge x_{k,k/2}}\xi_k^2(x)\ge C^{(5)}k^{-1/6}$.

    \end{itemize}
\end{thm}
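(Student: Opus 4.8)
The plan is to locate the maximum of $\xi_k^2$ near the outer turning point $b_k\in O(k^{1/2})$ and to evaluate it there by an Airy-type comparison, using the same kind of uniform approximation that underlies the upper bounds. First I would dispose of the trivial case of (i): when $k$ is even and $\sigma<0$ the factor $|x|^\sigma$ forces $\xi_k(x)\to\infty$ as $x\to0$ (since $\phi_k(0)=p_k(0)\ne0$), so $\max_{x\in\R}\xi_k^2=+\infty$ and (i) holds automatically; thus for (i) I may assume $k$ odd or $\sigma\ge0$, and (ii) is precisely the remaining case, in which the spurious blow-up at $0$ is removed by the restriction $|x|\ge x_{k,k/2}$.

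Next I would show that the global maximum sits at the outer turning point rather than the inner one. Between the top of the well and $b_k$ the quantity $F=\xi_k^2+(\xi_k')^2/q_k$ satisfies $F'=-\,q_k'q_k^{-2}(\xi_k')^2\ge0$ (using $\xi_k''=-q_k\xi_k$ and $q_k'\le0$ there), so the values of $\xi_k^2$ at successive antinodes increase towards $b_k$; a parallel estimate shows that the enhancement at the inner turning point $a_k$ is only of order $|q_k'(a_k)|^{-1/3}\in O(k^{-1/2})$, which is negligible against the outer one. Hence it suffices to estimate $\xi_k^2$ at the last antinode before $b_k$.

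The core step is the turning-point analysis at $b_k$, where $|q_k'(b_k)|\sim 2s^{3/2}\sqrt{2k}\in O(k^{1/2})$. After the rescaling $t=|q_k'(b_k)|^{1/3}(b_k-x)$ the equation $\xi_k''+q_k\xi_k=0$ becomes a perturbation of the Airy equation $\ddot u+tu=0$, so $\xi_k(x)\approx B_k\,\mathrm{Ai}(-t)$ with $B_k\sim\sqrt{\pi}\,A_k\,|q_k'(b_k)|^{-1/6}$, where $A_k$ is the WKB amplitude in $\xi_k\approx A_k q_k^{-1/4}\cos(\int\sqrt{q_k}-\pi/4)$ valid in the bulk of $\widehat{I}_k$. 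To bound $A_k$ from below I would use the normalization $\int_\R\xi_k^2\,dx=1$ together with
\[
  \int_{\widehat{I}_k}q_k^{-1/2}\,dx\longrightarrow \frac{\pi}{s}\qquad(k\to\infty),
\]
and the fact that only exponentially little $L^2$-mass leaks outside $\widehat{I}_k$; this gives $A_k^2\to 2s/\pi$, in particular $A_k^2\ge c>0$. Evaluating the Airy model at the first maximum of $\mathrm{Ai}(-t)$ (at $t\approx1.019$, value $\approx0.536$) then yields $\xi_k^2\ge c\,A_k^2\,|q_k'(b_k)|^{-1/3}\ge C^{(IV)}k^{-1/6}$ at the corresponding point $x_k^\ast=b_k-O(k^{-1/6})$, which proves (i). For (ii) I note that $x_k^\ast\sim b_k\in O(k^{1/2})$ exceeds $x_{k,k/2}\in O(k^{-1/2})$ once $k$ is large, so the maximizing antinode lies in the region $|x|\ge x_{k,k/2}$ and the same bound applies; the finitely many small $k$ are absorbed into the constant.

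The main obstacle is making the turning-point comparison effective \emph{from below}, uniformly in $k$: I must show that the error in the Airy approximation near $b_k$ is genuinely of lower order than the $k^{-1/6}$ main term (an Olver/Langer error estimate of the same strength as the one behind Theorem~\ref{t:upper estimates of xi k}), and that the amplitude $A_k$ does not degenerate, i.e.\ that the unit $L^2$-mass really concentrates in $\widehat{I}_k$. Once these two uniform controls are in place, the lower bound reduces to the value of the Airy function at its first extremum, which is a positive absolute constant.
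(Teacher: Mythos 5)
Your overall strategy (Plancherel--Rotach/Airy asymptotics at the outer turning point, plus the normalization $\int\xi_k^2\,dx=1$ to pin down the WKB amplitude) is a legitimate alternative route to the $k^{-1/6}$ lower bound, and several of your preliminary observations are sound: the triviality of (i) when $k$ is even and $\sigma<0$, the Sonin-type monotonicity of $F=\xi_k^2+(\xi_k')^2/q_k$ where $q_k'\le0$, and the orders of magnitude $|q_k'(b_k)|\in O(k^{1/2})$ and $\int_{\widehat{I}_k}q_k^{-1/2}\,dx\to\pi/s$. But as written the proposal defers exactly the substance of the theorem to the two ``uniform controls'' you flag at the end and never establish: (a) a lower-bound-effective Airy (Langer/Olver) approximation near $b_k$ with error $o(k^{-1/6})$ uniformly in $k$, and (b) the non-degeneracy $A_k^2\ge c>0$, which itself requires proving both that the $L^2$-mass concentrates in $\widehat{I}_k$ and that the form $A_kq_k^{-1/4}\cos(\cdots)$ is uniformly valid in the bulk. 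Neither is available elsewhere in this paper: Theorem~\ref{t:upper estimates of xi k} is \emph{not} proved by Airy asymptotics but by the Bonan--Clark concavity argument, so there is no ``error estimate of the same strength'' to borrow. Moreover, for $\bar\sigma_k\ne0$ the potential $q_k$ has an $x^{-2}$ singularity and the inner edge $a_k\in O(k^{-1/2})$ collapses to the origin, so the inner region is a Bessel-type rather than Airy-type regime; your treatment of the inner turning point is heuristic (though for the lower bound alone only the outer antinode is needed).

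For contrast, the paper avoids semiclassical analysis entirely: it starts from the Gauss--Jacobi identity $1=\int\bigl(p_k(x)/(x-x_{k,1})\bigr)^2|x|^{2\sigma}e^{-sx^2}\,dx\,/\,\bigl({p_k'}^2(x_{k,1})\lambda_{k,1}\bigr)$, bounds the contribution of the window $|x-x_{k,1}|\le\epsilon k^{-1/6}$ by $2\epsilon G$ using the inequality~\eqref{p^2(x)} together with Lemma~\ref{l:G} (whose proof rests on the decay estimate of Lemma~\ref{l:F}), and then reads off $\max\xi_k^2\ge Ck^{-1/6}$ from the fact that at least half the mass sits at distance $\ge\epsilon k^{-1/6}$ from $x_{k,1}$; part (ii) follows by showing that the region $|x|\le x_{k,k/2}$ contributes only $O(k^{-1})$ to that integral. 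If you want to pursue your route, the uniform Airy and Bessel approximations for these singular potentials would have to be proved from scratch or imported from the Plancherel--Rotach literature for generalized Hermite polynomials; as it stands, the proposal is a plan rather than a proof.
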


The method that Bonan-Clark has two steps: first, it estimates the distance from any point $x$ in the oscillation region $\widehat{I}_k$ to some root $x_{k,i}$, and, second, the value of $\xi_k^2(x)$ is estimated by using $|x-x_{k,i}|$. These computations become much more involved than in \cite{BonanClark1990}; indeed, several cases are considered separately, some of them with significant differences; for instance, some roots $x_{k,i}$ may not be in the oscillation region $\widehat{I}_k$, and the functions $\xi_k$ may not be bounded, as we said. 

The asymptotic distribution of the roots $x_{k,i}$ as $k\to\infty$ also has a well known measure theoretic interpretation \cite{ErdosTuran1940,VanAssche1985,VanAsscheTeugels1987}; specially, the generalized Hermite polynomials are considered in \cite[Section~4]{VanAssche1985}. However the weak convergence of measures considered in those publications does not seem to give the asymptotic approximation of the roots needed in the first step.

\section{Preliminaries}\label{s:preliminaries}

\subsection{Dunkl operator}\label{ss:D sigma}

Recall that, for any $\phi\in\Cinf=\Cinf(\R)$, there is some $\psi\in\Cinf$ such that $\phi(x)-\phi(0)=x\psi(x)$, which also satisfies
  \begin{equation}\label{psi (m)(x)}
    \psi^{(m)}(x)=\int_0^1t^m\phi^{(m+1)}(tx)\,dt
  \end{equation}
for all $m\in\N$ (see e.g.\ \cite[Theorem~1.1.9]{Hormander1990}). The notation $\psi=x^{-1}\phi$ is used.

The Dunkl operator, in the case of dimension one, is the differential-difference operator $T_\sigma$ on $\Cinf$, depending on a parameter $\sigma\in\R$, defined by
  \[
    (T_\sigma\phi)(x)=\phi'(x)+\sigma\,\frac{\phi(x)-\phi(-x)}{x}\;.
  \]
It can be considered as a perturbation of the derivative operator $\frac{d}{dx}$.

Consider the decomposition $\Cinf=\Cinf_{\text{\rm ev}}\oplus\Cinf_{\text{\rm odd}}$, as direct sum of subspaces of even and odd functions. The matrix expressions of operators on $\Cinf$ will be considered with respect to this decomposition. The operator of multiplication by a function $h$ will be denoted also by $h$. We can write
  $
    \frac{d}{dx}=
      \begin{pmatrix}
        0 &  \frac{d}{dx} \\
        \frac{d}{dx} & 0
      \end{pmatrix}
  $,
  $
    x=
      \begin{pmatrix}
        0 &  x \\
        x & 0
      \end{pmatrix}
  $ and
  $$
    T_\sigma=
      \begin{pmatrix}
        0 &  \frac{d}{dx}+2\sigma x^{-1} \\
        \frac{d}{dx} & 0
      \end{pmatrix}
    = \frac{d}{dx}+2\sigma
      \begin{pmatrix}
        0 &  x^{-1} \\
        0 & 0
      \end{pmatrix}
  $$
on $\Cinf$. With
$
  \Sigma=
    \begin{pmatrix}
      \sigma & 0 \\
      0 & -\sigma
    \end{pmatrix}
$, we have
  \begin{gather}
    [T_\sigma,x]=1+2\Sigma\;,\label{[D sigma,x]=1+2Sigma}\\
    T_\sigma\Sigma+\Sigma T_\sigma=x\,\Sigma+\Sigma\,x=0\;.
    \label{D sigma Sigma+Sigma D sigma=x Sigma+Sigma x=0}
  \end{gather}
Let $m!_\sigma$ denote the perturbed factorial of each $m\in\N$, which is inductively defined by setting $0!_\sigma=1$, and
  \[
    m!_\sigma=
      \begin{cases}
        (m-1)!_\sigma m & \text{if $m$ is even}\\
        (m-1)!_\sigma(m+2\sigma) & \text{if $m$ is odd}
      \end{cases}
  \]
for $m>0$. Observe that $m!_\sigma>0$ if $\sigma>-1/2$. For $k\le m$, even when $k!_\sigma=0$, the quotient $m!_\sigma/k!_\sigma$ can be understood as the product of the factors from the definition of $m!_\sigma$ which are not included in the definition of $k!_\sigma$. For any $\phi\in\Cinf$ and $m\in\N$, we have
  \begin{equation}\label{(T sigma m phi)(0)}
      (T_\sigma^m\phi)(0)=\frac{m!_\sigma}{m!}\phi^{(m)}(0)\;.
  \end{equation}
This equality follows by~\eqref{psi (m)(x)} and induction on $m$.

\subsection{Dunkl harmonic oscillator}\label{ss:L}

Recall that, for dimension one, the harmonic oscillator, and the annihilation and creation operators are
$H=-\frac{d^2}{dx^2}+s^2x^2$, $A=sx+\frac{d}{dx}$ and $A'=sx-\frac{d}{dx}$ on $\Cinf$. By using $T_\sigma$ instead of $d/dx$, we get the Dunkl harmonic oscillator, and Dunkl annihilation and creation operators:
  \begin{gather*}
    L=-T_\sigma^2+s^2x^2=H-2\sigma
      \begin{pmatrix}
        x^{-1}\,\frac{d}{dx} & 0 \\
        0 & \frac{d}{dx}\,x^{-1}
      \end{pmatrix}\;,\\
    B=sx+T_\sigma=A+2\sigma
      \begin{pmatrix}
        0 &  x^{-1} \\
        0 & 0
      \end{pmatrix}\;,\\
      B'=sx-T_\sigma=A'-2\sigma
      \begin{pmatrix}
        0 &  x^{-1} \\
        0 & 0
      \end{pmatrix}\;.
  \end{gather*}
By~\eqref{[D sigma,x]=1+2Sigma} and~\eqref{D sigma Sigma+Sigma D sigma=x Sigma+Sigma x=0},
  \begin{gather}
    L=BB'-(1+2\Sigma)s=B'B+(1+2\Sigma)s=\frac{1}{2}(BB'+B'B)\;,\label{L}\\
    [L,B]=-2sB\;,\quad[L,B']=2sB'\;,\label{[L,B]}\\
    [B,B']=2s(1+2\Sigma)\;,\label{[B,B']}\\
    [L,\Sigma]=B\Sigma+\Sigma B=B'\Sigma+\Sigma B'=0\;.
    \label{[L,Sigma]=B' Sigma+Sigma B'=0}
  \end{gather}

Recall also that the Schwartz space $\SS=\SS(\R)$ consists of the functions $\phi\in\Cinf$ such that $\|\phi\|_{\SS^m}=\sum_{i+j\le m}\sup_x|x^i\phi^{(j)}(x)|$ is finite for all $m\in\N$ (including zero\footnote{We adopt the convention $0\in\N$.}). This defines a sequence of norms $\|\ \|_{\SS^m}$ on $\SS$, which is endowed with the corresponding Fr\'echet topology. The Banach space completion of $\SS$ with respect to each norm $\|\ \|_{\SS^m}$ will be denoted by $\SS^m$. We have $\SS^{m+1}\subset\SS^m$ continuously\footnote{Let $X$ and $Y$ be topological vector spaces. It is said that $X\subset Y$ continuously if $X$ is a linear subspace of $Y$ and the inclusion map $X\hookrightarrow Y$ is continuous.}, and $\SS=\bigcap_m\SS^m$. Let us remark that $\|\phi'\|_{\SS^m}\le\|\phi\|_{\SS^{m+1}}$ for all $m$.

The above decomposition of $\Cinf$ can be restricted to each $\SS^m$ and $\SS$,
giving $\SS^m=\SS^m_{\text{\rm ev}}\oplus\SS^m_{\text{\rm odd}}$ and $\SS=\SS_{\text{\rm ev}}\oplus\SS_{\text{\rm odd}}$. The
matrix expressions of operators on $\SS$ will be considered with
respect to this decomposition. For $\phi\in \Cinf_{\text{\rm ev}}$, $\psi=x^{-1}\phi$ and $i,j\in\N$, it follows from~\eqref{psi (m)(x)} that
  \[
    |x^i\psi^{(j)}(x)|\le\int_0^1t^{j-i}|(tx)^i\phi^{(j+1)}(tx)|\,dt\le\sup_{y\in\R}|y^i\phi^{(j+1)}(y)|
  \]
for all $x\in\R$. Thus $\|\psi\|_{\SS^m}\le\|\phi\|_{\SS^{m+1}}$ for all $m\in\N$, obtaining that $\SS_{\text{\rm odd}}=x\,\SS_{\text{\rm ev}}$ and $x^{-1}:\Cinf_{\text{\rm odd}}\to \Cinf_{\text{\rm ev}}$ restricts to a continuous operator $x^{-1}:\SS_{\text{\rm odd}}\to\SS_{\text{\rm ev}}$. Therefore $x:\SS_{\text{\rm ev}}\to\SS_{\text{\rm odd}}$ is an isomorphism of Fr\'echet spaces, and $T_\sigma$, $B$, $B'$ and $L$ define continuous operators on $\SS$.

Let $\langle\ ,\ \rangle_\sigma$ and $\|\ \|_\sigma$ denote the scalar product and norm of $L^2(\R,|x|^{2\sigma}\,dx)$. Assume from now on that $\sigma>-1/2$, and therefore $\SS$ is dense in $L^2(\R,|x|^{2\sigma}\,dx)$. In $L^2(\R,|x|^{2\sigma}\,dx)$, with domain $\SS$, $-T_\sigma$ is adjoint of $T_\sigma$, $B'$ is adjoint of $B$, and $L$ is essentially self-adjoint. The self-adjoint extension of $L$ (with domain $\SS$) will be denoted by $\LL$, or $\LL_\sigma$. Its spectrum consists of the eigenvalues $(2k+1+2\sigma)s$ ($k\in\N$), with normalized eigenfunctions $\phi_k$ inductively defined by
  \begin{align}
    \phi_0&=s^{(2\sigma+1)/4}\Gamma(\sigma+1/2)^{-1/2}e^{-sx^2/2}\;,\label{phi 0}\\
    \phi_k&=
      \begin{cases}
        (2ks)^{-1/2}B'\phi_{k-1} & \text{if $k$ is even}\\
        (2(k+2\sigma)s)^{-1/2}B'\phi_{k-1} & \text{if $k$ is odd}
      \end{cases}
    \label{phi k}
  \end{align}
for $k\ge1$. We also have $B\phi_0=0$ and
    $$
      B\phi_k=
        \begin{cases}
          (2ks)^{1/2}\phi_{k-1} & \text{if $k$ is even}\\
          (2(k+2\sigma)s)^{1/2}\phi_{k-1} & \text{if $k$ is odd}
        \end{cases}
    $$
  for $k\ge1$. These assertions follow from \eqref{L}--\eqref{[L,Sigma]=B' Sigma+Sigma B'=0} like in the case of $H$.

\subsection{Generalized Hermite polynomials}\label{ss:p k}

From~\eqref{phi 0},~\eqref{phi k} and the definition of $B'$, it follows that $\phi_k=p_ke^{-sx^2/2}$, where $p_k$ is the sequence of polynomials inductively defined by
  \begin{align}
    p_0&=s^{(2\sigma+1)/4}\Gamma(\sigma+1/2)^{-1/2}\;,\label{p 0}\\
    p_k&=
      \begin{cases}
        (2ks)^{-1/2}(2sxp_{k-1}-T_\sigma p_{k-1}) & \text{if $k$ is even}\\
        (2(k+2\sigma)s)^{-1/2}(2sxp_{k-1}-T_\sigma p_{k-1}) & \text{if $k$ is odd}\;,
      \end{cases}\label{p k}
  \end{align}
for $k\ge1$. Each $p_k$ is of precise degree $k$, even/odd if $k$ is even/odd, and with positive leading coefficient, denoted by $\gamma_k$. Up to normalization, $p_k$ is the sequence of generalized Hermite polynomials  \cite[p.~380, Problem~25]{Szego1975} and generalized Hermite functions (they are orthogonormal with respect to the measure $|x|^{2\sigma}e^{-sx^2}\,dx$). By~\eqref{p k},
  \begin{equation}\label{gamma k}
    \gamma_k=
      \begin{cases}
        k^{-1/2}(2s)^{1/2}\gamma_{k-1} & \text{if $k$ is even}\\
        (k+2\sigma)^{-1/2}(2s)^{1/2}\gamma_{k-1} & \text{if $k$ is odd}
      \end{cases}
  \end{equation}
for $k\ge1$. We also have $T_\sigma p_0=0$ and
  \begin{equation}
    T_\sigma p_k=
          \begin{cases}
            (2ks)^{1/2}p_{k-1} & \text{if $k$ is even}\\
            (2(k+2\sigma)s)^{1/2}p_{k-1} & \text{if $k$ is odd}\;.
          \end{cases}
        \label{D sigma p k}
  \end{equation}
The following recursion formula follows directly from~\eqref{p k}
and~\eqref{D sigma p k}:
   \begin{equation}\label{recurrence}
     p_k=
       \begin{cases}
         k^{-1/2}\big((2s)^{1/2}xp_{k-1}-(k-1+2\sigma)^{1/2}p_{k-2}\big) & \text{if $k$ is even}\\
         (k+2\sigma)^{-1/2}\big((2s)^{1/2}xp_{k-1}-(k-1)^{1/2}p_{k-2}\big) & \text{if $k$ is odd}\;.
       \end{cases}
   \end{equation}

We have $p_k(0)=0$ if and only if $k$ is odd, and $p_k'(0)=0$ if and
only if $k$ is even. By~\eqref{recurrence} and induction on $k$,
  \begin{equation}\label{p k(0)}
    p_k(0)=(-1)^{k/2}\sqrt{\frac{(k-1+2\sigma)(k-3+2\sigma)\cdots(1+2\sigma)}{k(k-2)\cdots2}}\,p_0
  \end{equation}
if $k$ is even\footnote{As a convention, the product of an empty set
of factors is $1$. In this sense,~\eqref{p k(0)} and~\eqref{p k'(0)} also make sense for $k=0$ and $k=1$, respectively.}. When $k$ is odd, by~\eqref{D sigma p k} and~\eqref{p k(0)},
  \[
    (T_\sigma p_k)(0)=(-1)^{(k-1)/2}\sqrt{\frac{(k+2\sigma)(k-2+2\sigma)\cdots(1+2\sigma)2s}{(k-1)(k-3)\cdots2}}\,p_0\;,
  \]
obtaining by~\eqref{(T sigma m phi)(0)} that
  \begin{equation}\label{p k'(0)}
    p_k'(0)=\frac{(-1)^{(k-1)/2}}{1+2\sigma}\sqrt{\frac{(k+2\sigma)(k-2+2\sigma)\cdots(1+2\sigma)2s}{(k-1)(k-3)\cdots2}}\,p_0\;.
  \end{equation}

The following assertions come from the general theory of orthogonal
polynomials \cite[Chapter~III]{Szego1975}. All zeros of each
polynomial $p_k$ are real and of multiplicity one. Each open
interval between consecutive zeros of $p_k$ contains exactly one
zero of $p_{k+1}$, and at least one zero of every $p_\ell$ with
$\ell>k$. Moreover $p_k$ has exactly $\lfloor k/2\rfloor$ positive
zeros and $\lfloor k/2\rfloor$ negative zeros. The zeros of each
$p_k$ will be denoted $x_{k,1}>x_{k,2}>\dots>x_{k,k}$. On each
interval $(x_{k,i+1},x_{k,i})$, the function $p_{k+1}/p_k$ is
strictly increasing, and satisfies $\lim_{x\to x_{k,i}^\pm}\frac{p_{k+1}(x)}{p_k(x)}=\mp\infty$. For every polynomial $p$ of degree $\le k-1$, we have
  \begin{equation}\label{p^2(x)}
    p^2(x)\le\int_{-\infty}^\infty p^2(t)\,|t|^{2\sigma}e^{-st^2}\,dt\cdot\sum_{\ell=0}^kp_\ell^2(x)
  \end{equation}
for all $x\in\R$. The Gauss-Jacobi formula states
that there are $\lambda_{k,1},\lambda_{k,2},\dots,\lambda_{k,k}\in\R$ such that,
for any polynomial $p$ of degree $\le2k-1$,
  \begin{equation}\label{Gauss-Jacobi}
    \int_{-\infty}^\infty p(x)\,|x|^{2\sigma}e^{-sx^2}\,dx=\sum_{i=1}^kp(x_{k,i})\lambda_{k,i}\;.
  \end{equation}

\begin{lem}\label{l:p k' 2(x k,i) lambda k,i}
  We have
    \[
      {p_k'}^2(x_{k,i})\lambda_{k,i}=
        \begin{cases}
          2s & \text{if $k$ is even}\\
          2s/(1+2\sigma) & \text{if $k$ is odd}\;.
        \end{cases}
    \]
\end{lem}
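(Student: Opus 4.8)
The plan is to express ${p_k'}^2(x_{k,i})\lambda_{k,i}$ as a ratio of consecutive generalized Hermite polynomials at the node, and then to evaluate that ratio from the Dunkl recursions. For the first step I would invoke, from the general theory of orthogonal polynomials \cite[Chapter~III]{Szego1975}, the Christoffel-number identity $\lambda_{k,i}^{-1}=\sum_{j=0}^{k-1}p_j^2(x_{k,i})$ together with the Christoffel--Darboux formula
\[
  \sum_{j=0}^{n}p_j(x)p_j(y)=\frac{\gamma_n}{\gamma_{n+1}}\,\frac{p_n(x)p_{n+1}(y)-p_{n+1}(x)p_n(y)}{y-x}\;.
\]
Taking $n=k-1$, setting $x=x_{k,i}$ and letting $y\to x_{k,i}$, the vanishing $p_k(x_{k,i})=0$ kills one term while the surviving difference quotient tends to $p_k'(x_{k,i})$, so the sum collapses to $\frac{\gamma_{k-1}}{\gamma_k}p_{k-1}(x_{k,i})p_k'(x_{k,i})$. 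Hence
\[
  {p_k'}^2(x_{k,i})\,\lambda_{k,i}=\frac{\gamma_k}{\gamma_{k-1}}\,\frac{p_k'(x_{k,i})}{p_{k-1}(x_{k,i})}\;.
\]
The same identity follows by substituting the degree-$(2k-2)$ polynomial $\big(p_k(x)/(x-x_{k,i})\big)^2$ into the Gauss--Jacobi formula \eqref{Gauss-Jacobi}, which recasts the left-hand side as $\|p_k/(x-x_{k,i})\|_\sigma^2$.

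It then remains to compute $p_k'(x_{k,i})/p_{k-1}(x_{k,i})$. The leading-coefficient recursion \eqref{gamma k} gives $\gamma_k/\gamma_{k-1}=(2s)^{1/2}k^{-1/2}$ for $k$ even and $(2s)^{1/2}(k+2\sigma)^{-1/2}$ for $k$ odd, so the heart of the matter is relating $p_k'$ at the node to $p_{k-1}$. For this I would use \eqref{D sigma p k} in the form $T_\sigma p_k=c_kp_{k-1}$, with $c_k=(2ks)^{1/2}$ for $k$ even and $c_k=(2(k+2\sigma)s)^{1/2}$ for $k$ odd. Since $T_\sigma$ differs from $\frac{d}{dx}$ only by a multiple of $(p_k(x)-p_k(-x))/x$, this correction vanishes identically when $p_k$ is even, and vanishes at every nonzero root of the odd $p_k$ (because both $p_k(x_{k,i})$ and $p_k(-x_{k,i})$ are then $0$); at such nodes $p_k'(x_{k,i})=c_kp_{k-1}(x_{k,i})$, and the cancellation $\frac{\gamma_k}{\gamma_{k-1}}c_k=2s$ is immediate in both parities.

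The step I expect to be the main obstacle is the odd case at the central node $x_{k,(k+1)/2}=0$, where the Dunkl correction does not vanish. There I would avoid the explicit form of $T_\sigma$ and instead use the paper's own evaluation \eqref{(T sigma m phi)(0)} with $m=1$, which reads $(T_\sigma\phi)(0)=(1+2\sigma)\phi'(0)$; combined with $T_\sigma p_k=c_kp_{k-1}$ this yields $(1+2\sigma)p_k'(0)=c_kp_{k-1}(0)$, so $p_k'(0)/p_{k-1}(0)=c_k/(1+2\sigma)$ and the value at the origin becomes $\frac{2s}{1+2\sigma}$. Thus the extra factor $1+2\sigma$ recorded in the odd case is exactly the origin contribution produced by the Dunkl perturbation, a feature with no analogue in the classical Hermite computation. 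The delicate points I would check most carefully are that $p_{k-1}(0)\neq0$ and $p_k'(0)\neq0$ (guaranteed by \eqref{p k(0)} and \eqref{p k'(0)}), so that the ratio at the origin is well defined, and that the bifurcation of the odd case between the origin and the remaining nodes is reconciled consistently when assembling the final constant.
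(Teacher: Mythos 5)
Your route is the paper's route: substituting $p=p_kp_{k-1}/(x-x_{k,i})$ into the Gauss--Jacobi formula \eqref{Gauss-Jacobi} (the Christoffel--Darboux detour gives the same identity) yields $\gamma_k/\gamma_{k-1}=p_k'(x_{k,i})p_{k-1}(x_{k,i})\lambda_{k,i}$, after which everything reduces to evaluating $p_k'(x_{k,i})/p_{k-1}(x_{k,i})$ from \eqref{gamma k} and \eqref{D sigma p k}. Your evaluation is correct and in fact more careful than the paper's one-line conclusion: at every \emph{nonzero} node the Dunkl correction $2\sigma p_k(x)/x$ vanishes, so $p_k'(x_{k,i})=c_kp_{k-1}(x_{k,i})$ and the product $\tfrac{\gamma_k}{\gamma_{k-1}}c_k$ equals $2s$ for both parities, while at the central node $0$ of an odd $p_k$ the identity \eqref{(T sigma m phi)(0)} inserts the factor $(1+2\sigma)^{-1}$ and the product is $2s/(1+2\sigma)$.

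The gap is your last sentence: the ``reconciliation'' you defer cannot be carried out, because the two values you computed genuinely differ, and the printed statement does not match your (correct) computation. For $k$ odd the lemma asserts $2s/(1+2\sigma)$ at \emph{every} node, but at the nonzero nodes the value is $2s$. Concretely, take $s=1$, $\sigma=1/2$: then $p_3=x(x^2-2)/\sqrt2$, the positive node is $x_{3,1}=\sqrt2$ with $p_3'(\sqrt2)=2\sqrt2$, and the three-point Gauss--Jacobi rule for $|x|e^{-x^2}\,dx$ gives $\lambda_{3,1}=1/4$, so ${p_3'}^2(x_{3,1})\lambda_{3,1}=2=2s$, whereas $2s/(1+2\sigma)=1$ (which is indeed the value at the node $0$). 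The correct dichotomy is therefore by node rather than by parity of $k$: the product equals $2s$ if $x_{k,i}\ne0$ and $2s/(1+2\sigma)$ if $x_{k,i}=0$ (which occurs only for $k$ odd); equivalently ${p_k'}^2(x_{k,i})\lambda_{k,i}\le 2s\max\{1,(1+2\sigma)^{-1}\}$, which is all that the applications in Lemma~\ref{l:estimates of xi k} and in the proof of Theorem~\ref{t:lower estimates of max xi k^2} actually require, since only the dependence of the constants on $\sigma$ and $s$ matters there. You should either state the result in this node-dependent form or carry the maximum through; as written, your argument does not (and cannot) deliver the stated odd-$k$ constant at the nonzero nodes.
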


\begin{proof}
  This is a direct adaptation of the proof of \cite[Corollary~3]{BonanClark1990}. With $p=\frac{p_kp_{k-1}}{x-x_{k,i}}$, the formula~\eqref{Gauss-Jacobi} becomes $\frac{\gamma_k}{\gamma_{k-1}}=p_k'(x_{k,i})p_{k-1}(x_{k,i})\lambda_{k,i}$, and the result follows from~\eqref{gamma k}--\eqref{D sigma p k}.
\end{proof}

\subsection{Relation with the Laguerre polynomials}\label{ss:Laguerre}

For $\alpha>-1$, the sequence of Laguerre polynomials, $L_n^\alpha$, can be determined by requiring the following \cite[Chapter~V, Section~5.1]{Szego1975}:
  \begin{equation}\label{Laguerre}
    \int_0^\infty L_m^\alpha(y)L_n^\alpha(y)y^\alpha e^{-y}\,dy=\frac{\Gamma(n+\alpha+1)}{n!}\,\delta_{mn}\;,
  \end{equation}
each $L_n^\alpha$ has precise degree $n$, and the sign of its leading coefficient is $(-1)^n$. Thus the corresponding Laguerre functions
  \[
    \LL_n^\alpha(y)=\left(\frac{n!}{\Gamma(n+\alpha+1)}\right)^{1/2}L_n^\alpha(y)y^{\alpha/2}e^{-y/2}
  \]
form a complete orthonormal system of $L^2(\R_+,dy)$. With the change of variable $y=sx^2$ in~\eqref{Laguerre}, we get
  \[
    2s^{\alpha+1}\int_0^\infty L_m^\alpha(sx^2)L_n^\alpha(sx^2)x^{2\alpha+1}e^{-sx^2}\,dx=\frac{\Gamma(n+\alpha+1)}{n!}\,\delta_{mn}\;.
  \]
By taking $\sigma=\alpha\pm1/2$, it follows that  (see e.g.\ \cite[p.~525]{Rosler1998} or \cite[p.~23]{Rosler2003})
  \begin{align}
    p_{2n}(x)&=(-1)^n\left(\frac{n!}{\Gamma(n+\sigma+1/2)}\right)^{1/2}s^{\sigma/2+1/4}L_n^{\sigma-1/2}(sx^2)\;,\notag\\
    p_{2n+1}(x)&=(-1)^n\left(\frac{n!}{\Gamma(n+\sigma+3/2)}\right)^{1/2}s^{\sigma/2+3/4}xL_n^{\sigma+1/2}(sx^2)\;,\notag\\
    \xi_{2n}(x)&=(-1)^ns^{\sigma/2+1/4}x^{1/2}\LL_n^{\sigma-1/2}(sx^2)\;,\label{xi_2n}\\
    \xi_{2n+1}(x)&=(-1)^ns^{\sigma/2+3/4}x^{1/2}\LL_n^{\sigma+1/2}(sx^2)\;,\label{xi_2n+1}
  \end{align}
for all $x>0$. On the other hand, the following asymptotic estimates of Laguerre functions were shown in \cite{Erdelyi1960} for $\alpha\ge0$ (see also \cite{AskeyWainger1965,Muckenhoupt1970a-II}), and extended to the case $-1<\alpha<0$ in \cite{Muckenhoupt1970b}: there are some $C,\gamma>0$, depending only on $\alpha$, such that, for all $n\in\N$ and $x>0$,
  \begin{equation}\label{asymptotic estimates of Laguerre}
    |\LL_n^\alpha(x)|\le
      \begin{cases}
        Cx^{\alpha/2}\nu^{\alpha/2} & \text{if $0<x\le1/\nu$}\\
        Cx^{-1/4}\nu^{-1/4} & \text{if $1/\nu<x\le\nu/2$}\\
        Cx^{-1/4}(\nu^{1/3}+|x-\nu|)^{-1/4} & \text{if $\nu/2<x\le3\nu/2$}\\
        Ce^{-\gamma x} & \text{if $3\nu/2<x$\;,}
      \end{cases}
  \end{equation}
where $\nu=4n+2\alpha+2$, with the proviso that we must take $\nu=2$ if $n=0$ and $\alpha<0$. By~\eqref{xi_2n},~\eqref{xi_2n+1} and~\eqref{asymptotic estimates of Laguerre},
  \begin{align}
    |\xi_{2n}(x)|&\le
      \begin{cases}
        Cs^\sigma x^\sigma\nu^{\sigma/2-1/4} & \text{if $0<x\le\sqrt{\frac{1}{s\nu}}$}\\
        Cs^{\sigma/2}\nu^{-1/4} & \text{if $\sqrt{\frac{1}{s\nu}}<x\le\sqrt{\frac{\nu}{2s}}$}\\
        Cs^{\sigma/2}(\nu^{1/3}+|sx^2-\nu|)^{-1/2} & \text{if $\sqrt{\frac{\nu}{2s}}<x\le\sqrt{\frac{3\nu}{2s}}$}\\
        Cs^{\sigma/2+1/4}x^{1/2}e^{-\gamma sx^2} & \text{if $\sqrt{\frac{3\nu}{2s}}<x$\;,}
      \end{cases}\label{|xi_2n|}\\
    \intertext{where $\nu=4n+2\sigma+1$, with the proviso that we must take $\nu=2$ if $n=0$ and $\sigma<1/2$, and}
    |\xi_{2n+1}(x)|&\le
      \begin{cases}
        Cs^{\sigma+1}x^{\sigma+1}\nu^{\sigma/2+1/4} & \text{if $0<x\le\sqrt{\frac{1}{s\nu}}$}\\
        Cs^{(\sigma+1)/2}\nu^{-1/4} & \text{if $\sqrt{\frac{1}{s\nu}}<x\le\sqrt{\frac{\nu}{2s}}$}\\
        Cs^{(\sigma+1)/2}(\nu^{1/3}+|sx^2-\nu|)^{-1/2} & \text{if $\sqrt{\frac{\nu}{2s}}<x\le\sqrt{\frac{3\nu}{2s}}$}\\
        Cs^{\sigma/2+3/4}x^{1/2}e^{-\gamma sx^2} & \text{if $\sqrt{\frac{3\nu}{2s}}<x$\;,}
      \end{cases}\label{|xi_2n+1|}
  \end{align}
where $\nu=4n+2\sigma+3$. Theorems~\ref{t:upper estimates of xi k}-(ii),(iii) and~\ref{t:upper estimates of phi k for k even and sigma<0} easily follow from~\eqref{|xi_2n|} and~\eqref{|xi_2n+1|}.

\section{Estimates using the method of Bonan-Clark}\label{s:estimates}

\subsection{Second perturbation of $H$}\label{ss:K}

Consider the perturbed derivative,
  \[
    E_\sigma=|x|^\sigma T_\sigma|x|^{-\sigma}=
      \begin{pmatrix}
        0 & \frac{d}{dx}+\sigma x^{-1} \\
        \frac{d}{dx}-\sigma x^{-1} & 0
      \end{pmatrix}\;,
  \]
and the perturbed harmonic oscillator,
  \[
    K=|x|^\sigma L|x|^{-\sigma}=-E_\sigma^2+s^2x^2=
      \begin{pmatrix}
       H+\sigma(\sigma-1)x^{-2} & 0 \\
        0 & H+\sigma(\sigma+1)x^{-2}
      \end{pmatrix}\;,
  \]
defined on $|x|^\sigma\,\SS=|x|^\sigma\,\SS_{\text{\rm ev}}\oplus|x|^\sigma\,\SS_{\text{\rm odd}}$. According to Sections~\ref{ss:L} and~\ref{ss:p k}, and since
$|x|^\sigma:L^2(\R,|x|^{2\sigma}\,dx)\to L^2(\R,dx)$ is a unitary
isomorphism, $K$ is essentially self-adjoint in $L^2(\R,dx)$, and
the spectrum of its self-adjoint extension, denoted by $\KK$, or $\KK_\sigma$, consists of the eigenvalues $(2k+1+2\sigma)s$ ($k\in\N$) of multiplicity one, and the corresponding normalized
eigenfunctions are $\xi_k=|x|^\sigma\phi_k=|x|^\sigma p_ke^{-sx^2/2}$.

Each $\xi_k$ is $\Cinf$ on $\R\setminus\{0\}$, and it is
$\Cinf$ on $\R$ if and only if $\sigma\in\N$. If
$\sigma>0$ or $k$ is odd, then $\xi_k$ is defined and continuous on
$\R$, and $\xi_k(0)=0$. If $\sigma<0$ and $k$ is even, then $\xi_k$
is only defined on $\R\setminus\{0\}$; in fact, by~\eqref{p k(0)}, $\lim_{x\to0}\xi_k(x)=(-1)^{k/2}\infty$.

By~\eqref{D sigma p k} and~\eqref{recurrence},
  \begin{align}
    \xi_k'&=\left(p_k'+\left(\frac{\sigma}{x}-sx\right)p_k\right)|x|^\sigma e^{-sx^2/2}\label{xi k'}\\
    &=
      \begin{cases}
        (\sqrt{2ks}\,p_{k-1}+(\frac{\sigma}{x}-sx)p_k)\,|x|^\sigma e^{-sx^2/2} & \text{if $k$ is even}\\
        (\sqrt{2(k+2\sigma)s}\,p_{k-1}-(\frac{\sigma}{x}+sx)p_k)\,|x|^\sigma e^{-sx^2/2} & \text{if $k$ is odd}
      \end{cases}\notag\\
    &=
      \begin{cases}
        ((sx+\frac{\sigma}{x})p_k-\sqrt{2(k+1+2\sigma)s}\,p_{k+1})\,|x|^\sigma e^{-sx^2/2} & \text{if $k$ is even}\\
        ((sx-\frac{\sigma}{x})p_k-\sqrt{2(k+1)s}\,p_{k+1})\,|x|^\sigma e^{-sx^2/2} & \text{if $k$ is
        odd}\;.
      \end{cases}\label{xi k', cases}
  \end{align}
By~\eqref{xi k'},~\eqref{p k(0)} and~\eqref{p k'(0)},
  \[
    \lim_{x\to0^\pm}\xi_k'(x)=
      \begin{cases}
        0 & \text{if $\sigma>1$ or $\sigma=0$}\\
        \pm p_k(0) &\text{if $\sigma=1$}\\
        \pm(-1)^{k/2}\infty &\text{if $0<\sigma<1$}\\
        \mp(-1)^{k/2}\infty &\text{if $-1/2<\sigma<0$}
      \end{cases}
  \]
if $k$ is even,
  \[
    \lim_{x\to0}\xi_k'(x)=
      \begin{cases}
        0 & \text{if $\sigma>0$}\\
        p_k'(0) &\text{if $\sigma=0$}\\
        (-1)^{(k-1)/2}\infty &\text{if $-1/2<\sigma<0$}
      \end{cases}
  \]
if $k$ is odd, and
  \begin{equation}\label{lim x to0 pm(xi k xi k')(x)}
    \lim_{x\to0^\pm}(\xi_k\xi_k')(x)=
      \begin{cases}
        0 & \text{if $k$ is odd or $\sigma=0$ or $\sigma>1/2$}\\
        \pm p_k^2(0)/2 & \text{if $k$ is even and $\sigma=1/2$}\\
        \pm\infty & \text{if $k$ is even and $0<\sigma<1/2$}\\
        \mp\infty & \text{if $k$ is even and $-1/2<\sigma<0$}\;.
      \end{cases}
  \end{equation}
By~\eqref{xi k', cases},
  \begin{equation}\label{frac xi k' xi k}
    \frac{\xi_k'}{\xi_k}=
      \begin{cases}
        sx+\frac{\sigma}{x}-\sqrt{2(k+1+2\sigma)s}\,\frac{p_{k+1}}{p_k} & \text{if $k$ is even}\\
        sx-\frac{\sigma}{x}-\sqrt{2(k+1)s}\,\frac{p_{k+1}}{p_k} & \text{if $k$ is odd}\;,
      \end{cases}
  \end{equation}
which generalizes a formula of \cite{Hille1926} for the Hermite functions.

For the sake of simplicity, let $\bar\sigma_k=\sigma(\sigma-(-1)^k)$. Each $\xi_k$ satisfies
  \begin{equation}\label{xi k''+q k xi k=0}
    \xi_k''+q_k\xi_k=0\;,
  \end{equation}
where $q_k=(2k+1+2\sigma)s-s^2x^2-\bar\sigma_kx^{-2}$.

\subsection{Description of $q_k$}\label{ss:q k}

The following elementary analysis of the functions $q_k$ will be used in Sections~\ref{ss:zeros} and~\ref{ss:estimates of xi k}. If $k$ is even, then $\bar\sigma_k=0$ if $\sigma\in\{0,1\}$, $\bar\sigma_k<0$ if $0<\sigma<1$, and $\bar\sigma_k>0$ otherwise. When $k$ is odd, we have $\bar\sigma_k=0$ if $\sigma=0$, and $\sigma\bar\sigma_k>0$ if $\sigma\ne0$. Each $q_k$ is defined and smooth on $\R$ just when $\bar\sigma_k=0$, otherwise it is defined and smooth only on $\R\setminus\{0\}$. Moreover $q_k$ is even and $q_k'=-2s^2x+2\bar\sigma_kx^{-3}$. Observe that
  \begin{alignat*}{2}
    \lim_{x\to\pm\infty}q_k(x)&=-\infty\;,&\quad
    \lim_{x\to0}q_k(x)&=
      \begin{cases}
        -\infty & \text{if $\bar\sigma_k>0$}\\
        \infty & \text{if $\bar\sigma_k<0$}\;,
      \end{cases}\\
    \lim_{x\to\pm\infty}q_k'(x)&=\mp\infty\;,&\quad
    \lim_{x\to0^{\pm}}q_k'(x)&=
      \begin{cases}
        \pm\infty & \text{if $\bar\sigma_k>0$}\\
        \mp\infty & \text{if $\bar\sigma_k<0$}\;.
      \end{cases}
  \end{alignat*}
We have the following cases for the zeros of $q_k'$:
  \begin{itemize}

    \item If $\bar\sigma_k>0$, then $q_k'$ has two zeros, $\pm x_{\text{max}}=\pm\sqrt{\sqrt{\bar\sigma_k}/s}$. At these points, $q_k$ reaches its maximum, which equals $c_{\text{max}}s$ for $c_{\text{max}}=2k+1+2\sigma-2\sqrt{\bar\sigma_k}$. Notice that, in this case, $c_{\text{max}}=0$ if $k=0$ and $\sigma=-1/8$, $c_{\text{max}}<0$ if $k=0$ and $-1/2<\sigma<-1/8$, and $c_{\text{max}}>0$ otherwise.

    \item If $\bar\sigma_k=0$, then $q_k'$ has one zero, which is $0$, where $q_k$ reaches its maximum $c_{\text{max}}s$ as above with $c_{\text{max}}=2k+1+2\sigma>0$.

    \item If $\bar\sigma_k<0$, then $q_k'>0$ on $\R_-$ and $q_k'<0$ on $\R_+$.

\end{itemize}
We have the following possibilities for the zeros of $q_k$:
  \begin{itemize}

    \item If $\bar\sigma_k>0$ and $c_{\text{max}}>0$, then $q_k$ has four zeros, which are
      \begin{align}
        \pm a_k&=\pm\sqrt{\frac{2k+1+2\sigma-\sqrt{(2k+1+2\sigma)^2-4\bar\sigma_k}}{2s}}\;,
        \notag\\
        \pm b_k&=\pm\sqrt{\frac{2k+1+2\sigma+\sqrt{(2k+1+2\sigma)^2-4\bar\sigma_k}}{2s}}
        \label{b_k}\;.
       \end{align}

    \item If $\bar\sigma_k>0$ and $c_{\text{max}}=0$, or $\bar\sigma_k\le0$, then $q_k$ has two zeros, $\pm b_k$,  defined by~\eqref{b_k}.

    \item If $\bar\sigma_k>0$ and $c_{\text{max}}<0$, then $q_k<0$.

  \end{itemize}

If $q_k$ has four zeros, $\pm a_k$ and $\pm b_k$, then
  \begin{equation}\label{s(b k-a k) 2}
    s(b_k-a_k)^2=c_{\text{max}}\;,
  \end{equation}
and
  \[
    2sa_k^2=\frac{4\bar\sigma_k}{2k+1+2\sigma+\sqrt{(2k+1+2\sigma)^2-4\bar\sigma_k}}\;,
  \]
obtaining
      \begin{equation}\label{a_k in O(k^-1/2)}
        a_k\in O(k^{-1/2})
      \end{equation}
as $k\to\infty$. 

If $q_k$ has at least two zeros, $\pm b_k$, then  
  \[
    2s(b_k^2-b_\ell^2)=2+4\,\frac{k^2-\ell^2+(1+2\sigma)(k-\ell)+\bar\sigma_\ell-\bar\sigma_k}{\sqrt{(2k+1+2\sigma)^2-4\bar\sigma_k}+\sqrt{(2\ell+1+2\sigma)^2-4\bar\sigma_\ell}}
  \]
for $\ell\le k$, obtaining
  \begin{equation}\label{b_k+1 -b_k in O(k^-1/2)}
    b_{k+1}-b_k\in O(k^{-1/2})
  \end{equation}
as $k\to\infty$, and
  \begin{equation}\label{b_k-b_ell ge C(k-ell)k^-1/2}
    b_k-b_\ell\ge C(k-\ell)k^{-1/2}
  \end{equation}
for some $C>0$ if $k$ and $\ell$ are large enough. If $\bar\sigma_k=0$, then $sb_k^2=c_{\text{max}}$.

Like in~\cite{BonanClark1990}, the maximal open intervals where $q_k$ is defined and $>0$
(respectively, $<0$) will be called {\em oscillation\/}
(respectively, {\em non-oscillation\/}) intervals of $\xi_k$; this
terminology is justified by Lemma~\ref{l:zeros in oscillation
intervals} below. We have the following possibilities for the oscillation intervals:
  \begin{itemize}

    \item If $\bar\sigma_k>0$ and $c_{\text{max}}>0$, then $\xi_k$ has two oscillation intervals, $(a_k,b_k)$ and $(-b_k,-a_k)$, containing $x_{\text{max}}$ and $-x_{\text{max}}$, respectively.

    \item If $\bar\sigma_k>0$ and $c_{\text{max}}\le0$, then $\xi_k$ has no oscillation intervals.

    \item If $\bar\sigma_k<0$, then $\xi_k$ has two oscillation
intervals, $(-b_k,0)$ and $(0,b_k)$.

    \item If $\bar\sigma_k=0$, then $\xi_k$ has one oscillation interval, $(-b_k,b_k)$.

  \end{itemize}

\subsection{Location of the zeros of $\xi_k$ and $\xi_k'$}\label{ss:zeros}

In $\R\setminus\{0\}$, the functions $\xi_k$ and $p_k$ have the same
zeros. Then $\xi_k$ and $\xi_k'$ have no common zeros by~\eqref{xi
k'}. The functions $\xi_0$ and $\xi_1$ have no zeros in
$\R\setminus\{0\}$, and the two zeros $\pm x_{2,1}$ of $\xi_2$ are
in $\R\setminus\{0\}$.

\begin{lem}\label{l:zeros in oscillation intervals}
  On $\R\setminus\{0\}$:
    \begin{itemize}

      \item[(i)] the zeros of $\xi_k'$ belong to the oscillation intervals of $\xi_k$;

      \item[(ii)] if $k$ is odd or $\sigma\ge0$, the zeros of $\xi_k$ belong to the oscillation intervals of $\xi_k$;

      \item[(iii)] if $k$ is even and $\sigma<0$, the zeros of $\xi_k$, possibly except $\pm x_{k,k/2}$, belong to the oscillation
      intervals of $\xi_k$.

    \end{itemize}
\end{lem}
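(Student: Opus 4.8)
The engine is the second-order equation \eqref{xi k''+q k xi k=0}, which reads $\xi_k''=-q_k\xi_k$, together with two facts already recorded: $\xi_k$ and $\xi_k'$ have no common zero on $\R\setminus\{0\}$, and the non-oscillation intervals are exactly the components of $\{q_k<0\}$ described in Section~\ref{ss:q k}. Since $q_k$ is even and each $\xi_k$ is even or odd, it suffices to argue on $\R_+$. On a non-oscillation interval $-q_k>0$, so $\xi_k''$ and $\xi_k$ share the same sign there; hence $\xi_k$ is strictly convex wherever it is positive and strictly concave wherever it is negative. From this I extract two Sturm-type statements valid on any single non-oscillation interval: (a) $\xi_k$ has at most one critical point, and (b) $\xi_k$ has at most one zero. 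For (a), between two consecutive critical points $\xi_k$ is strictly monotone, while the sign test $\xi_k''=-q_k\xi_k$ forces a local minimum to have $\xi_k\ge0$ and a local maximum to have $\xi_k\le0$; these constraints are incompatible with that monotonicity. Statement (b) is the classical remark that two zeros would enclose an extremum of the wrong convexity.

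For part~(i) I must upgrade ``at most one critical point'' to ``no critical point'' on each non-oscillation interval, using the boundary behaviour. On the unbounded interval $(b_k,\infty)$, a critical point $x_0$ is a local minimum with $\xi_k(x_0)>0$ or a local maximum with $\xi_k(x_0)<0$ (it cannot be a common zero); convexity, respectively concavity, then propagates $\xi_k'\ge0$, respectively $\xi_k'\le0$, over all of $(x_0,\infty)$, so $\xi_k$ stays monotonically bounded away from $0$, contradicting $\xi_k(x)\to0$ as $x\to\infty$. The only inner non-oscillation interval is $(0,a_k)$, which occurs exactly when $\bar\sigma_k>0$. When $\sigma>0$ or $k$ is odd we have $\xi_k(0^+)=0$, and the same propagation toward the origin contradicts this vanishing value; this disposes of the subcases ($k$ even, $\sigma>1$) and ($k$ odd, $\sigma>0$).

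Parts~(ii) and~(iii) concern the zeros, and by (b) it remains only to locate them. On $(b_k,\infty)$ a single zero together with $\xi_k\to0$ would force a critical point there, already excluded, so $\xi_k$ has no zero beyond $b_k$. On $(0,a_k)$ with $\xi_k(0^+)=0$ (the cases $\sigma\ge0$ or $k$ odd) the absence of critical points forces constant sign—an interior zero plus the boundary value $0$ would enclose an extremum—so there is no zero in $(0,a_k)$ either; this yields~(ii). In the remaining case $k$ even and $\sigma<0$, where $\xi_k\to(-1)^{k/2}\infty$ at the origin by \eqref{p k(0)}, statement (b) shows $(0,a_k)$ contains at most one zero, necessarily the smallest positive zero $x_{k,k/2}$, so every other zero lies in $(a_k,b_k)$, giving~(iii). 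When $x_{k,k/2}\in(0,a_k)$, $\xi_k$ is convex on $(0,x_{k,k/2})$ and concave on $(x_{k,k/2},a_k)$ with a boundary zero at $x_{k,k/2}$, which forces strict monotonicity on each side and excludes critical points, finishing~(i).

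The main obstacle is precisely this last case, $k$ even and $\sigma<0$: the blow-up $\xi_k\to\pm\infty$ at the origin destroys the ``monotone decay to a finite boundary value'' leverage used everywhere else, and a convex positive branch emanating from $+\infty$ is a priori allowed to dip to an interior minimum. Completing~(i) here means ruling out such a spurious minimum when $(0,a_k)$ carries no zero, i.e.\ when $x_{k,k/2}\ge a_k$. I expect to settle it through the near-origin asymptotics $\xi_k\sim p_k(0)\,|x|^\sigma$ from \eqref{p k(0)}, which is strictly monotone, fed into the Riccati equation $\psi'=-q_k-\psi^2$ for $\psi=\xi_k'/\xi_k$ to propagate the sign of $\xi_k'$ across $(0,a_k)$, backed by a sharp comparison of $x_{k,k/2}$ with $a_k$ (both of order $k^{-1/2}$ by \eqref{a_k in O(k^-1/2)}).
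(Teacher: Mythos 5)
Your convexity mechanism (the sign relation $\xi_k''=-q_k\xi_k$ on non-oscillation intervals, hence strict local minima only at positive values and strict local maxima only at negative values, plus propagation of monotonicity from a boundary value) is sound and a legitimate variant of the paper's device, which instead notes that $(\xi_k\xi_k')'={\xi_k'}^2-q_k\xi_k^2>0$ on non-oscillation intervals and exploits the monotonicity of $\xi_k\xi_k'$ together with its limits at $0$ and $\infty$ from \eqref{lim x to0 pm(xi k xi k')(x)}. Your argument does dispose of the outer interval $(b_k,\infty)$, of the inner interval $(0,a_k)$ whenever $\xi_k(0^+)=0$, and of parts (ii) and (iii) in full (modulo the routine exclusion of zeros sitting exactly at $a_k$ or $b_k$, which the paper is equally brief about).

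However, there is a genuine gap exactly where you flag one: $k$ even, $\sigma\in(-1/2,0)$, with no zero of $\xi_k$ in $(0,a_k)$. There you must rule out an interior minimum of the convex positive branch descending from $+\infty$, and your closing paragraph only announces a plan (``I expect to settle it through the near-origin asymptotics \dots fed into the Riccati equation'') without carrying it out. The local analysis cannot close it: near $0$ one has $\psi=\xi_k'/\xi_k\sim\sigma/x\to-\infty$ while $\psi'=-q_k-\psi^2\sim-\sigma x^{-2}>0$, so $\psi$ increases from $-\infty$ and nothing in the asymptotics prevents it from reaching $0$ before $a_k$. The paper closes this case with a global input you do not use: by \eqref{frac xi k' xi k}, at the first positive zero $x_*$ of $\xi_k\xi_k'$, if it is a zero of $\xi_k'$, then $0=sx_*+\sigma/x_*-\sqrt{2(k+1+2\sigma)s}\,p_{k+1}(x_*)/p_k(x_*)$; since $\xi_k$ has no zero in $[-x_*,x_*]$, the interlacing of the zeros of $p_k$ and $p_{k+1}$ forces $p_{k+1}/p_k>0$ on $(0,x_*]$, whence $sx_*+\sigma/x_*>0$, i.e.\ $x_*>\sqrt{-\sigma/s}$, and the explicit estimate $a_k^2<-\sigma/s$ (valid for the relevant $k\ge2$) then gives $x_*>a_k$. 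Some quantitative ingredient of this kind --- the ratio formula plus the bound on $a_k$ --- appears indispensable, and without it part (i) remains unproved in this case.
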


\begin{proof}
  It is enough to consider the zeros in $\R_+$ because $\xi_k$ is either even or odd. We can also assume that $\xi_k\xi_k'$ has zeros on $\R_+$, otherwise there is nothing to prove.

  Let $x_*$ and $x^*$ denote the minimum and maximum of the zeros of $\xi_k\xi_k'$ in $\R_+$. By \eqref{xi k''+q k xi k=0}, we have $(\xi_k\xi_k')'={\xi_k'}^2-q_k\xi_k^2>0$ on the non-oscillation intervals, and therefore $\xi_k\xi_k'$ is strictly increasing on those
  intervals. In particular, since $\xi_k\xi_k'$ is strictly increasing on
  $(b_k,\infty)$ and $(\xi_k\xi_k')(x)\to0$ as $x\to\infty$, it follows that $x^*<b_k$. This shows the statement when there is
  one oscillation interval of the form $(-b_k,b_k)$. So it remains to consider
  the case where there is an oscillation interval of $\xi_k$ in $\R_+$ of the form $(a_k,b_k)$. This holds when $k$ is odd and $\sigma>0$, $k=0$ and $\sigma\in(-1/8,0)\cup(1,\infty)$, or $k\in2\Z_+$
  and $\sigma\in(-1/2,0)\cup(1,\infty)$.

  If $k$ is odd and $\sigma>0$, or $k$ is even and $\sigma\in(1,\infty)$, then $x_*>
  a_k$ because $\xi_k\xi_k'$ is strictly increasing on
  $(0,a_k)$ and $(\xi_k\xi_k')(x)\to0$ as $x\to0^+$ by~\eqref{lim x to0 pm(xi k xi k')(x)}.

  Finally, assume that $k\in2\Z_+$ and $\sigma\in(-1/2,0)$. Then the above arguments do not work because $(\xi_k\xi_k')(x)\to-\infty$ as
  $x\to0^+$ by~\eqref{lim x to0 pm(xi k xi k')(x)}. Let $f$ be the function on $\R_+$ defined by $f(x)=sx+\frac{\sigma}{x}$. We have $f(x)\to-\infty$
  as $x\to0^+$, and $f'=s-\frac{\sigma}{x^2}>0$ on $\R_+$. Moreover $\sqrt{-\sigma/s}$ is the unique zero of $f$ in $\R_+$.

  If $x_*$ is a zero of $\xi_k'$, then $\xi_k$ (and $p_k$ as well) has no zeros in $[-x_*,x_*]$. Therefore $0$ is the unique zero of $p_{k+1}$ in this interval. So $p_{k+1}/p_k>0$ on $(0,x_*]$. Since
    \[
      0=f(x_*)-\sqrt{2(k+1+2\sigma)s}\,\frac{p_{k+1}(x_*)}{p_k(x_*)}
    \]
  by~\eqref{frac xi k' xi k}, it follows that $f(x_*)>0$, obtaining
  $x_*>\sqrt{-\sigma/s}$. But
    \[
      a_k^2=\frac{2k+1+2\sigma-\sqrt{(2k+1)^2+8(k+1)\sigma}}{2s}<-\frac{\sigma}{s}
    \]
  because $k>1$, obtaining $x_*>a_k$.

  If $x_*$ is a zero of $\xi_k$ (i.e., $x_*=x_{k,k/2}$), then the other positive zeros of $\xi_k\xi_k'$ are $>a_k$ because
  this function is strictly increasing on $(0,a_k)$.
\end{proof}

 In the case of Lemma~\ref{l:zeros in oscillation intervals}-(iii), the zero $\pm x_{k,k/2}$ of $\xi_k$ may be in an oscillation interval, in a non-oscillation interval, or in their common boundary point. For instance, for $k=2$,
    \[
      p_2=\left(\sqrt{\frac{2}{1+2\sigma}}sx^2-\sqrt{\frac{1+2\sigma}{2}}\right)p_0
    \]
  by~\eqref{p k}, obtaining $x_{2,1}^2=\frac{1+2\sigma}{2s}$. Moreover
    \[
      a_2^2=\frac{5+2\sigma-\sqrt{25+24\sigma}}{2s}\;.
    \]
  So
    \[
      x_{2,1}-a_2=\frac{-4+\sqrt{25+24\sigma}}{2s}\;,
    \]
  and therefore $\sigma>-3/8$ if and only if $x_{2,1}>a_2$. So $(a_2,b_2)$ contains no zero of $\xi_2$ when $\sigma\in(-1/2,-3/8]$. For $k>2$, every oscillation interval
of $\xi_k$ contains some zero of $\xi_k$ by Lemma~\ref{l:zeros in
oscillation intervals}.

\begin{lem}\label{l:J}
  There are $C_0,C_1,C_2>0$, depending on $\sigma$ and $s$, such that, if $k\ge C_0$ and $I$ is any oscillation interval of $\xi_k$, then there is some subinterval $J\subset I$ so that:
    \begin{itemize}

      \item[(i)] for every $x\in J$, there is some zero $x_{k,i}$ of
      $\xi_k$ in $I$ such that $|x-x_{k,i}|\le\frac{C_1}{\sqrt{q_k(x)}}$;

      \item[(ii)]  each connected component of $I\setminus J$ is of length
      $\le C_2k^{-1/2}$.

    \end{itemize}
\end{lem}

\begin{proof}
  According to Section~\ref{ss:q k}, for any $c>0$ with $cs\in q_k(I)$, the set
  $I_c=I\cap q_k^{-1}([cs,\infty))$ is a subinterval of $I$, whose boundary in $I$ is $I\cap
  q_k^{-1}(cs)$.

    \begin{claim}\label{cl:distribution of the zeros}
      If $\length(I_c)\ge2\pi/\sqrt{cs}$, then each boundary point of $I_c$ in $I$ satisfies the condition of~(i) with $x_{k,i}\in I_c$ and $C_1=2\pi$.
    \end{claim}

  Let $f_c$ be the function on $\R$ defined by $f_c(x)=\sin(\sqrt{cs}\,x)$, whose zeros are $\ell\pi/\sqrt{cs}$ for $\ell\in\Z$. Since $f_c''+csf_c=0$ and $cs\le q_k$ on $I_c$, the zeros of $\xi_k$ in $I_c$ separate the zeros of $f_c$ in $I_c$ by Sturm's comparison theorem. If $\length(I_c)\ge2\pi/\sqrt{cs}$, then each boundary point $x$ of $I_c$ is at a distance $\le2\pi/\sqrt{cs}$ of two consecutive zeros of $f_c$ in $I_c$, and there is some zero of $\xi_k$ between them, which shows Claim~\ref{cl:distribution of the zeros} because $q_k(x)=cs$.

  Now we have to analyze each type of oscillation interval separately, corresponding to the possibilities for $\bar\sigma_k$ and $c_{\text{max}}$. When there are two oscillation intervals of $\xi_k$, it is enough to consider only the oscillation interval contained in $\R_+$ because the function $\xi_k$ is either even or odd.

  The first type of oscillation interval is of the form $I=(a_k,b_k)$, which corresponds to the conditions $\bar\sigma_k>0$ and $c_{\text{max}}>0$. We have $cs\in q_k(I)$ when $0<c\le c_{\text{max}}$. Then $q_k^{-1}(cs)$ consists of the points
    \begin{align}
      \pm a_{k,c}&=\pm\sqrt{\frac{2k+1+2\sigma-c-\sqrt{(2k+1+2\sigma-c)^2-4\bar\sigma_k}}{2s}}\;,
      \notag\\
      \pm b_{k,c}&=\pm\sqrt{\frac{2k+1+2\sigma-c+\sqrt{(2k+1+2\sigma-c)^2-4\bar\sigma_k}}{2s}}\;,
      \label{b k,c}
    \end{align}
  and we get $I_c=[a_{k,c},b_{k,c}]$. Since
    \begin{equation}\label{s(b k,c-a k,c) 2}
      s(b_{k,c}-a_{k,c})^2=c_{\text{max}}-c\;,
    \end{equation}
  we have $\length(I_c)\ge2\pi/\sqrt{cs}$ if and only if $c(c_{\text{max}}-c)\ge4\pi^2$, which means that $c_{\text{max}}\ge4\pi$ and $c_-\le c\le c_+$ for
    \[
      c_\pm=\frac{c_{\text{max}}\pm\sqrt{c_{\text{max}}^2-16\pi^2}}{2}\;.
    \]
  Since $c_{\text{max}}\in O(k)$ as $k\to\infty$, there is some $C_0>0$, depending on $\sigma$ and $s$, such that $c_{\text{max}}\ge4\pi$ for all $k\ge C_0$. Assuming $k\ge C_0$, let $a_{k,\pm}=a_{k,c_\pm}$ and
  $b_{k,\pm}=b_{k,c_\pm}$, which satisfy
    \[
      a_k<a_{k,-}<a_{k,+}<b_{k,+}<b_{k,-}<b_k\;.
    \]

  Fix any $x\in I$ and let $q_k(x)=cs$. First, $x\in[a_{k,-},a_{k,+}]\cup[b_{k,+},b_{k,-}]$ if and only if $\length(I_c)\ge2\pi/\sqrt{cs}$, and in this case $x$ satisfies the condition of~(i) with $x_{k,i}\in I_c$ and $C_1=2\pi$ by Claim~\ref{cl:distribution of the zeros}. Second, if $x\in(a_k,a_{k,-})\cup(b_{k,-},b_k)$, then $\length(I_c)<2\pi/\sqrt{cs}$, $I_c\supset I_{c_-}$, and we already know that $I_{c_-}$ contains some zero of $\xi_k$. Hence $x$ also satisfies the condition of~(i) with $C_1=2\pi$. And third, if $x\in(a_{k,+},b_{k,+})$, then
    \begin{gather*}
      s(b_{k,+}-a_{k,+})^2=c_{\text{max}}-c_+=c_-=\frac{16\pi^2}{c_+}\le\frac{32\pi^2}{c_{\text{max}}}\le\frac{32\pi^2}{c}
    \end{gather*}
   by~\eqref{s(b k,c-a k,c) 2}, obtaining $\length(I_{c_+})\le4\sqrt{2}\pi/\sqrt{cs}$. Since $I_c\subset I_{c_+}$ and it is already proved that $I_{c_+}$ contains some zero of $\xi_k$, it follows that $x$ also satisfies the condition of~(i) with $C_1=4\sqrt{2}\pi$. Summarizing,~(i) holds in this case with $J=I$ and $C_1=4\sqrt{2}\pi$ if $c_{\text{max}}\ge4\pi$. In this case,~(ii) is obvious because $J=I$.

  The second type of oscillation interval is of the form $I=(0,b_k)$, which corresponds to the condition $\bar\sigma_k<0$. Now, $cs\in q_k(I)$ for any $c>0$, the set $q_k^{-1}(cs)$ consists of the points $\pm b_{k,c}$, defined like in~\eqref{b k,c}, and we have $I_c=(0,b_{k,c}]$. The equality $cs=q_k(2\pi/\sqrt{cs})$ holds when
    \begin{equation}\label{(2k+1+sigma) 2-bar sigma k-16 pi 2>0}
      (2k+1+2\sigma)^2-4\bar\sigma_k-16\pi^2>0
    \end{equation}
  and $c$ is
    \[
      c_\pm=2\pi^2\,\frac{2k+1+2\sigma\pm\sqrt{(2k+1+2\sigma)^2-4\bar\sigma_k-16\pi^2}}{\bar\sigma_k-4\pi^2}\;.
    \]
  Assuming~\eqref{(2k+1+sigma) 2-bar sigma k-16 pi 2>0}, we have $\length(I_c)\ge2\pi/\sqrt{cs}$ if and only if $c_-\le c\le c_+$. Let $b_{k,\pm}=b_{k,c_\pm}$, satisfying $0<b_{k,+}<b_{k,-}<b_k$.

  Fix any $x\in I$ and let $q_k(x)=cs$. First, $x\in[b_{k,+},b_{k,-}]$ if and only if $\length(I_c)\ge2\pi/\sqrt{cs}$; in this case, $x$ satisfies the condition of~(i) with $x_{k,i}\in I_c$ and $C_1=2\pi$ by Claim~\ref{cl:distribution of the zeros}. And second, if $x\in(b_{k,-},b_k)$, then $\length(I_c)<2\pi/\sqrt{cs}$, $I_c\supset I_{c_-}$, and we already know that $I_{c_-}$ contains some zero of $\xi_k$. Hence $x$ also satisfies the condition of~(i) with $C_1=2\pi$. So, when~\eqref{(2k+1+sigma) 2-bar sigma k-16 pi 2>0} is true,~(i) holds with $J=[b_{k,+},b_k)$ and $C_1=2\pi$.

  Notice that  $c_+\in O(k)$  as $k\to\infty$. Then there are some $C_0,C_2>0$, depending on $\sigma$ and $s$, such that, if $k\ge C_0$, then~\eqref{(2k+1+sigma) 2-bar sigma k-16 pi 2>0} holds and $sb_{k,+}^2=4\pi^2/c_+\le C_2k^{-1}$, showing~(ii) in this case.

  The third and final type of oscillation interval is $I=(-b_k,b_k)$, which corresponds to the condition $\bar\sigma_k=0$. We have $cs\in q_k(I)$ when $0<c\le c_{\text{max}}$. Then $q_k^{-1}(cs)$ consists of the points $\pm b_{k,c}$, defined like in~\eqref{b k,c}, and  we get $I_c=[-b_{k,c},b_{k,c}]$. Since
    \begin{equation}\label{sb k,c 2}
      sb_{k,c}^2=c_{\text{max}}-c\;,
    \end{equation}
  we have $\length(I_c)\ge2\pi/\sqrt{cs}$ if and only if $c(c_{\text{max}}-c)\ge\pi^2$, which means that $c_{\text{max}}\ge\pi$ and $c_-\le c\le c_+$ for
    \[
      c_\pm=\frac{c_{\text{max}}\pm\sqrt{c_{\text{max}}^2-4\pi^2}}{2}\;.
    \]
  Since $c_{\text{max}}\in O(k)$ as $k\to\infty$, there is some $C_0>0$, depending on $\sigma$ and $s$, such that $c_{\text{max}}\ge4\pi$ for all $k\ge C_0$. Assuming $k\ge C_0$, let $b_{k,\pm}=b_{k,c_\pm}$, which satisfy $0<b_{k,+}<b_{k,-}<b_k$.

  Fix any $x\in I$ and let $q_k(x)=cs$. First, $b_{k,+}\le |x|\le b_{k,-}$ if and only if $\length(I_c)\ge2\pi/\sqrt{cs}$; in this case, $x$ satisfies the condition of~(i) with $x_{k,i}\in I_c$ and $C_1=2\pi$ by Claim~\ref{cl:distribution of the zeros}. Second, if $|x|>b_{k,-}$, then $\length(I_c)<2\pi/\sqrt{cs}$, $I_c\supset I_{c_-}$, and we already know that $I_{c_-}$ contains some zero of $\xi_k$. Hence $x$ also satisfies the condition of~(i) with $C_1=2\pi$. And third, if $|x|<b_{k,+}$, then
    \[
      sb_{k,+}^2=c_{\text{max}}-c_+=c_-=\frac{4\pi^2}{c_+}\le\frac{8\pi^2}{c_{\text{max}}}\le\frac{8\pi^2}{c}
    \]
   by~\eqref{sb k,c 2}, obtaining $\length(I_{c_+})\le\sqrt{2}\,\pi/\sqrt{cs}$. Since $I_c\subset I_{c_+}$ and it is already proved that $I_{c_+}$ contains some zero of $\xi_k$, it follows that $x$ also satisfies the condition of~(i) with $C_1=\sqrt{2}\pi$. Summarizing,~(i) holds in this case with $J=I$ and $C_1=2\pi$. In this case,~(ii) is also obvious because $J=I$.
\end{proof}

\begin{lem}\label{l:J'}
  For some $C'_0,C'_1,C'_2>0$, depending on $\sigma$ and $s$, if $k\ge C'_0$ and $I$ is any oscillation interval of $\xi_k$, then there is some subinterval $J'\subset I$ so that:
    \begin{itemize}

      \item[(i)] $q_k\ge C'_1k^{1/3}$ on $J'$;

      \item[(ii)] each connected component of $I\setminus J'$ is of length $\le C'_2k^{-1/6}$.

    \end{itemize}
\end{lem}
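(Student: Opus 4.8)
The plan is to take $J'$ to be a superlevel set of $q_k$ and to read off both conclusions from the description of $q_k$ in Section~\ref{ss:q k}. Set
\[
  J'=I\cap q_k^{-1}\bigl([C'_1k^{1/3},\infty)\bigr)
\]
for a constant $C'_1>0$ to be fixed. Property~(i) then holds by construction. Since the maximum of $q_k$ on $I$ is $c_{\text{max}}s$ and $c_{\text{max}}=2k+1+2\sigma-2\sqrt{\bar\sigma_k}\in O(k)$ satisfies $c_{\text{max}}\ge c_*k$ for some $c_*>0$ once $k$ is large, the level $C'_1k^{1/3}$ lies strictly below that maximum as soon as $k\ge C'_0$; hence $J'$ is a nonempty subinterval of $I$, and it is a single interval because $q_k$ is strictly monotone on each side of its maximum, exactly as in the proof of Lemma~\ref{l:J}.

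For property~(ii) I would run through the three types of oscillation interval, as in Lemma~\ref{l:J}. Writing $cs=C'_1k^{1/3}$, the boundary of $J'$ in $I$ consists of points where $q_k=cs$, i.e.\ of the points $a_{k,c}$, $b_{k,c}$ of~\eqref{b k,c}. When $I=(a_k,b_k)$ (the case $\bar\sigma_k>0$, $c_{\text{max}}>0$) we have $J'=[a_{k,c},b_{k,c}]$, with the two components of $I\setminus J'$ being $(a_k,a_{k,c})$ and $(b_{k,c},b_k)$; combining~\eqref{s(b k-a k) 2} and~\eqref{s(b k,c-a k,c) 2} their total length, hence that of each component, satisfies
\[
  (b_k-a_k)-(b_{k,c}-a_{k,c})
  =\frac{\sqrt{c_{\text{max}}}-\sqrt{c_{\text{max}}-c}}{\sqrt{s}}
  \le\frac{c}{\sqrt{s}\,\sqrt{c_{\text{max}}}}
  \le\frac{C'_1}{s^{3/2}\sqrt{c_*}}\,k^{-1/6}\;.
\]
The same computation with~\eqref{sb k,c 2} and the identity $sb_k^2=c_{\text{max}}$ handles $I=(-b_k,b_k)$ (the case $\bar\sigma_k=0$), each of the two symmetric components having length $b_k-b_{k,c}\le c/(\sqrt{s}\,\sqrt{c_{\text{max}}})\in O(k^{-1/6})$. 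For $I=(0,b_k)$ (the case $\bar\sigma_k<0$) there is only one component $(b_{k,c},b_k)$, and expanding $b_k^2-b_{k,c}^2$ from~\eqref{b_k} and~\eqref{b k,c} gives $b_k^2-b_{k,c}^2\le c/s$, whence $b_k-b_{k,c}=(b_k^2-b_{k,c}^2)/(b_k+b_{k,c})\in O(k^{-1/6})$ because $b_k$ is bounded below by a multiple of $k^{1/2}$ and $c=C'_1k^{1/3}/s$.

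An equivalent and more transparent way to see all of these bounds is via the mean value theorem: a component of $I\setminus J'$ adjacent to a zero $z\in\{a_k,b_k\}$ of $q_k$ has length comparable to $C'_1k^{1/3}/|q_k'|$ near $z$, and from $q_k'=-2s^2x+2\bar\sigma_kx^{-3}$ one reads that $|q_k'|$ is of order $2s^2b_k\sim k^{1/2}$ near $b_k$ and of order $2\bar\sigma_ka_k^{-3}\sim k^{3/2}$ near $a_k$ (using~\eqref{a_k in O(k^-1/2)}). This makes visible both that the exponent $1/3$ is tuned precisely so that $k^{1/3}/k^{1/2}=k^{-1/6}$, and that the component near $a_k$ is in fact of the far smaller order $O(k^{-7/6})$. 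The main point to be careful about is uniformity: one must check that the boundary point $b_{k,c}$ stays of exact order $k^{1/2}$, so that $|q_k'|$ is bounded below by a multiple of $k^{1/2}$ on the whole complementary interval rather than just at its endpoint, and that $c_{\text{max}}\ge c_*k$ holds with $c_*$ depending only on $\sigma$. Both are routine consequences of the formulas of Section~\ref{ss:q k}, so the obstacle here is bookkeeping rather than conceptual; the one genuine input is the correct matching of the exponents $1/3$ and $1/2$.
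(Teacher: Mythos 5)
Your proof is correct and takes essentially the same approach as the paper: both define $J'$ as a superlevel set of $q_k$ and exploit the fact that near the endpoint $b_k$ a level of order $k^{1/3}$ corresponds to a distance of order $k^{-1/6}$ (since $|q_k'|\sim k^{1/2}$ there). The only difference is the direction of the bookkeeping --- the paper fixes the cutoff point $b_k-k^{-1/6}$ and verifies $q_k\in O(k^{1/3})$ there, while you fix the level $C'_1k^{1/3}$ and verify the complementary components have length $O(k^{-1/6})$; you also write out all three types of oscillation interval where the paper treats one and declares the rest analogous.
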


\begin{proof}
   We  use the notation of the proof of Lemma~\ref{l:J}. The same type of argument can be used for all types of oscillation intervals. Thus, e.g., suppose that $I$ is of the type $(0,b_k)$. Since $b_k\in O(k^{1/2})$ as $k\to\infty$, we have $b'_k=b_k-k^{-1/6}\in I$ for $k$ large enough, and
    \[
      q_k(b'_k)=-s^2(k^{-1/3}-2b_kk^{-1/6})-4\bar\sigma_k((b_k-k^{-1/6})^{-2}-b_k^{-2})\in O(k^{1/3})
    \]
  as $k\to\infty$. So there are $C'_0,C'_1>0$, depending on $\sigma$ and $s$, such that $b'_k\in I$ and $c'=q_k(b'_k)\ge C'_1k^{1/3}$ for $k\ge C'_0$. Then~(i) and~(ii) hold with $J'=I_{c'}=(0,b'_k]$.
\end{proof}

\begin{cor}\label{c:distribution of the zeros}
  There exist $C''_0,C''_1>0$, depending on $\sigma$ and $s$, such that, if $k\ge C''_0$ and $I$ is any oscillation interval of $\xi_k$, then, for each $x\in I$, there exists some zero $x_{k,i}$ of $\xi_k$ in $I$ so that $|x-x_{k,i}|\le C''_1k^{-1/6}$.
\end{cor}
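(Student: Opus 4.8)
The plan is to combine the two preceding lemmas, Lemma~\ref{l:J} and Lemma~\ref{l:J'}, by intersecting the two subintervals they produce. Given an oscillation interval $I$ of $\xi_k$, Lemma~\ref{l:J} furnishes a subinterval $J$ on which every point is within $C_1/\sqrt{q_k(x)}$ of some zero of $\xi_k$ in $I$, while Lemma~\ref{l:J'} furnishes a subinterval $J'$ on which $q_k\ge C_1'k^{1/3}$. The natural candidate is to work on the overlap $J\cap J'$: there the distance bound of Lemma~\ref{l:J}(i) can be combined with the lower bound $q_k\ge C_1'k^{1/3}$ from Lemma~\ref{l:J'}(i) to get
\begin{equation*}
  |x-x_{k,i}|\le\frac{C_1}{\sqrt{q_k(x)}}\le\frac{C_1}{\sqrt{C_1'k^{1/3}}}=\frac{C_1}{\sqrt{C_1'}}\,k^{-1/6}\;,
\end{equation*}
which is precisely the desired bound on $J\cap J'$. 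So the first step is to set $C_0''=\max\{C_0,C_0'\}$ and verify this estimate on the common part.

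Next I would handle the points of $I$ lying outside $J\cap J'$, i.e. in $(I\setminus J)\cup(I\setminus J')$. Here the idea is not to use the distance-to-a-zero estimate directly but to exploit the smallness of these leftover pieces. By Lemma~\ref{l:J}(ii) each connected component of $I\setminus J$ has length $\le C_2k^{-1/2}$, and by Lemma~\ref{l:J'}(ii) each component of $I\setminus J'$ has length $\le C_2'k^{-1/6}$. Since $k^{-1/2}\le k^{-1/6}$ for $k\ge1$, every such component has length $O(k^{-1/6})$. The key observation is that each component of $I\setminus(J\cap J')$ is adjacent to $J\cap J'$ (or to an endpoint of $I$ that is a zero-accumulation point), so for $x$ in such a component one can reach a nearby point of $J\cap J'$—and hence a zero $x_{k,i}$ guaranteed by the first step—at a total cost $O(k^{-1/6})$. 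Concretely, for $x$ in a leftover component, pick the boundary point $x_0$ shared with $J\cap J'$; then $|x-x_0|$ is bounded by the component length $\le(C_2+C_2')k^{-1/6}$, and $x_0$ is within $\frac{C_1}{\sqrt{C_1'}}k^{-1/6}$ of a zero, so the triangle inequality gives $|x-x_{k,i}|\le C_1''k^{-1/6}$ with $C_1''=\frac{C_1}{\sqrt{C_1'}}+C_2+C_2'$.

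The main obstacle I anticipate is bookkeeping at the boundary, specifically making sure that for every leftover component there genuinely is an adjacent point of $J\cap J'$ that carries a valid zero, and that the zero found there still lies in the \emph{same} oscillation interval $I$ (as the statement requires $x_{k,i}\in I$). One has to check that $J\cap J'$ is nonempty and in fact a single subinterval for $k$ large, which follows because in each case of Lemma~\ref{l:J} and Lemma~\ref{l:J'} the sets $J$ and $J'$ are subintervals containing the bulk of $I$ (both are cut off only near the endpoints $a_k$, $0$, or $b_k$), so their complements are short intervals at the ends; taking $k$ large enough that $(C_2+C_2')k^{-1/6}$ is smaller than the length of $I$ (which grows like $b_k\in O(k^{1/2})$) guarantees $J\cap J'\ne\emptyset$ and that the end-components do not overlap. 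Once this geometric picture is nailed down—essentially a one-dimensional covering argument gluing the interior estimate to the short boundary pieces—the corollary follows with $C_0''=\max\{C_0,C_0'\}$ and the explicit $C_1''$ above, and by symmetry the argument on an oscillation interval in $\R_-$ is identical since $\xi_k$ is even or odd.
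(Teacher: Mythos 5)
Your proposal is correct and follows essentially the same route as the paper: intersect the subintervals $J$ and $J'$ of Lemmas~\ref{l:J} and~\ref{l:J'}, combine the bounds of parts (i) on $J\cap J'$ to get the $k^{-1/6}$ estimate there, and use the length bounds of parts (ii) plus the triangle inequality to extend it to all of $I$. The paper takes $C''_2=\max\{C_2,C'_2\}$ where you take the sum, but this is immaterial.
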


\begin{proof}
  With the notation of Lemmas~\ref{l:J} and~\ref{l:J'}, let $C''_0=\max\{C_0,C'_0\}$ and $C''_2=\max\{C_2,C'_2\}$. Assume $k\ge C''_0$ and consider the subinterval $J''=J\cap J'\subset I$. By Lemmas~\ref{l:J}-(ii) and~\ref{l:J'}-(ii), each connected component of $I\setminus J''$ is of length $\le C''_2k^{-1/6}$. Then, for each $x\in I$, there is some $x''\in J''$ such that $|x-x''|\le C''_2k^{-1/6}$. By Lemmas~\ref{l:J}-(i) and~\ref{l:J'}-(i), there is some zero $x_{k,i}$ of $\xi_k$ in $I$ such that $|x''-x_{k,i}|=\frac{C_1}{\sqrt{q_k(x'')}}\le\frac{C_1}{\sqrt{C'_1}}\,k^{-1/6}$. Hence $|x-x_{k,i}|\le\bigl(C''_2+C_1/\sqrt{C'_1}\bigr)k^{-1/6}$.
\end{proof}

\subsection{Estimates of $\xi_k$}\label{ss:estimates of xi k}

\begin{lem}\label{l:estimates of xi k}
  Let $I$ be an oscillation interval of $\xi_k$, let  $x\in I$ and let $x_{k,i}$ be a zero of $\xi_k$ in $I$. Then
    \[
      \xi_k^2(x)\le
        \begin{cases}
          \frac{8s}{3}\,|x-x_{k,i}| & \text{if $k$ is even}\\
          \frac{8s}{3(1+2\sigma)}\,|x-x_{k,i}| & \text{if $k$ is odd}\;.
        \end{cases}
    \]
\end{lem}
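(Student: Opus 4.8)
The plan is to bound $\xi_k^2(x)$ by integrating the derivative of a suitable auxiliary quantity from the zero $x_{k,i}$ out to $x$. The natural object to consider is $w = \xi_k^2 + \xi_k'^2/q_k$, or something close to it, since the differential equation $\xi_k'' + q_k \xi_k = 0$ from~\eqref{xi k''+q k xi k=0} makes such a combination nearly monotone. First I would recall that on the oscillation interval $q_k > 0$, and that by Lemma~\ref{l:p k' 2(x k,i) lambda k,i} together with the normalization we have control on $\xi_k'(x_{k,i})$; indeed at a zero of $\xi_k$, since $\xi_k = p_k |x|^\sigma e^{-sx^2/2}$ and the $|x|^\sigma e^{-sx^2/2}$ factor relates the value of $\xi_k'$ at the zero to $p_k'(x_{k,i})$, so $\xi_k'^2(x_{k,i})$ is computable from ${p_k'}^2(x_{k,i})$.

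\medskip

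\textbf{Key steps.} Set $w = \xi_k^2 + \xi_k'^2 / q_k$. Then, using~\eqref{xi k''+q k xi k=0},
  \[
    w' = 2\xi_k \xi_k' + \frac{2\xi_k' \xi_k''}{q_k} - \frac{\xi_k'^2 q_k'}{q_k^2}
       = 2\xi_k\xi_k' - 2\xi_k\xi_k' - \frac{\xi_k'^2 q_k'}{q_k^2}
       = -\frac{q_k'}{q_k^2}\,\xi_k'^2 .
  \]
On the portion of $I$ lying in $\R_+$ to the right of $x_{\text{max}}$, we have $q_k' < 0$, so $w' \ge 0$ there; to the left of $x_{\text{max}}$ we have $q_k' > 0$, so $w' \le 0$. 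Thus $w$ is maximized going away from the peak of $q_k$, and in particular $w(x) \le w(x_{k,i})$ when $x$ lies between $x_{k,i}$ and the peak on the appropriate side, while more care is needed when $x$ and $x_{k,i}$ straddle the peak. At $x_{k,i}$ we have $\xi_k = 0$, so $w(x_{k,i}) = \xi_k'^2(x_{k,i})/q_k(x_{k,i})$. Combining this with the value $\xi_k'^2(x_{k,i}) = {p_k'}^2(x_{k,i})|x_{k,i}|^{2\sigma}e^{-sx_{k,i}^2}$ and Lemma~\ref{l:p k' 2(x k,i) lambda k,i}, one relates $w(x_{k,i})$ to $2s\lambda_{k,i}^{-1}|x_{k,i}|^{2\sigma}e^{-sx_{k,i}^2}/q_k(x_{k,i})$ (even case; divide by $1+2\sigma$ in the odd case). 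Since $\xi_k^2(x) \le w(x)$ and $w$ is monotone in the right direction, the bound reduces to controlling the Christoffel–type weight against $q_k(x_{k,i})$.

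\medskip

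\textbf{The alternative, likely cleaner route} is to bound $\xi_k^2(x)$ directly by integrating $|(\xi_k^2)'| = 2|\xi_k\xi_k'|$ from $x_{k,i}$ to $x$, using $\xi_k^2(x) = \int_{x_{k,i}}^x 2\xi_k\xi_k'\,dt$ together with the elementary inequality $2|\xi_k\xi_k'| \le \sqrt{q_k}\,\xi_k^2 + \xi_k'^2/\sqrt{q_k} \le \sqrt{q_k}\,w$, and then invoking the monotonicity of $w$ to replace $w$ by $w(x_{k,i})$ over the interval of integration. Because the factor of $|x - x_{k,i}|$ appears explicitly in the target estimate and $q_k$ is bounded above by its peak value $c_{\text{max}}s$, the coefficient $8s/3$ (respectively $8s/(3(1+2\sigma))$) must emerge from carefully tracking the maximum of $\sqrt{q_k}$ against the Christoffel constant from Lemma~\ref{l:p k' 2(x k,i) lambda k,i}; the constant $\tfrac{8}{3}$ strongly suggests a step bounding $\sqrt{q_k} \le (2/\sqrt{3})\cdot(\text{peak})$ or an analogous numerical optimization over the sub-interval.

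\medskip

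\textbf{The main obstacle} I anticipate is handling the case where $x$ and the chosen zero $x_{k,i}$ lie on opposite sides of the maximum point $x_{\text{max}}$ of $q_k$, since then $w$ is not monotone across $[x_{k,i}, x]$ and one cannot simply bound $w(t) \le w(x_{k,i})$ throughout. The resolution will be to exploit that $w$ decreases up to the peak and increases afterward, so its maximum on the interval is attained at one of the two endpoints; one then compares $w(x_{k,i})$ with $w(x)$ and absorbs the discrepancy, or restricts attention to the half-interval on one side of the peak where monotonicity holds and argues that the zero can always be chosen on the same side as $x$. Making the bookkeeping of signs of $q_k'$ and the placement of zeros relative to $x_{\text{max}}$ precise, while keeping the final constant at exactly $8s/3$, is the delicate part; everything else is a direct consequence of~\eqref{xi k''+q k xi k=0}, Lemma~\ref{l:p k' 2(x k,i) lambda k,i}, and elementary monotonicity.
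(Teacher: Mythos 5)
There is a genuine gap here: neither of your two routes actually produces the factor $|x-x_{k,i}|$ with the stated constant, and the point where you defer to ``controlling the Christoffel-type weight against $q_k(x_{k,i})$'' is precisely the step that cannot be closed with the tools at hand. The Sonin-type function $w=\xi_k^2+\xi_k'^2/q_k$ satisfies $w'=-q_k'\xi_k'^2/q_k^2$, so on the part of an oscillation interval beyond the peak of $q_k$ (where $q_k'<0$) $w$ is \emph{increasing}; for a point $x$ farther from the peak than the zero $x_{k,i}$ --- the typical configuration, since one takes the nearest zero --- the inequality $w(x)\le w(x_{k,i})$ goes the wrong way. More fundamentally, even where the monotonicity cooperates, your endgame reduces to bounding $\xi_k'^2(x_{k,i})/q_k(x_{k,i})$, i.e.\ $\bigl({p_k'}^2(x_{k,i})\,|x_{k,i}|^{2\sigma}e^{-sx_{k,i}^2}\bigr)/q_k(x_{k,i})$. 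Lemma~\ref{l:p k' 2(x k,i) lambda k,i} only gives ${p_k'}^2(x_{k,i})\lambda_{k,i}=2s$ (resp.\ $2s/(1+2\sigma)$), so what you need is an estimate of the Christoffel numbers $\lambda_{k,i}$ against $|x_{k,i}|^{2\sigma}e^{-sx_{k,i}^2}/q_k(x_{k,i})$; that is a nontrivial piece of orthogonal-polynomial asymptotics which the paper neither proves nor uses, and which is essentially equivalent in strength to the estimate being proved. The constant $8s/3$ does not come from optimizing $\sqrt{q_k}$ against its peak.

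The paper's argument is different and avoids all of this. One reduces to the case where $\xi_k$ has no zero strictly between $x_{k,i}$ and $x$ and, say, $\xi_k>0$ there; then \eqref{xi k''+q k xi k=0} with $q_k>0$ on $I$ makes $\xi_k$ concave on $[x_{k,i},x]$, so the chord from $(x_{k,i},0)$ to $(x,\xi_k(x))$ lies below the graph and $\tfrac12\xi_k(x)(x-x_{k,i})\le\int_{x_{k,i}}^x\xi_k(t)\,dt$. Writing $\xi_k(t)=\frac{p_k(t)|t|^{\sigma}e^{-st^2/2}}{t-x_{k,i}}\,(t-x_{k,i})$ and applying Cauchy--Schwarz, the first factor is $\int_{-\infty}^{\infty}\bigl(p_k(t)/(t-x_{k,i})\bigr)^2|t|^{2\sigma}e^{-st^2}\,dt$, which the Gauss--Jacobi formula \eqref{Gauss-Jacobi} evaluates \emph{exactly} as ${p_k'}^2(x_{k,i})\lambda_{k,i}$ (the quadrature kills every node except $x_{k,i}$), while the second factor is $(x-x_{k,i})^3/3$. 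Combining with Lemma~\ref{l:p k' 2(x k,i) lambda k,i} yields $\xi_k^2(x)\le\tfrac43\,{p_k'}^2(x_{k,i})\lambda_{k,i}\,|x-x_{k,i}|$, which is exactly $\tfrac{8s}{3}|x-x_{k,i}|$ or $\tfrac{8s}{3(1+2\sigma)}|x-x_{k,i}|$. The concavity-plus-quadrature mechanism, not a Liouville--Green monotonicity argument, is what produces both the linear factor $|x-x_{k,i}|$ and the clean constant.
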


\begin{proof}
   We can assume that there are no zeros of $\xi_k$ between $x$ and $x_{k,i}$. For the sake of simplicity, suppose also that $x_{k,i}<x$ and $\xi_k>0$ on $(x_{k,i},x)$; the other cases are analogous. The key observation of~\cite{BonanClark1990} is that then the graph of $\xi_k$ on $[x_{k,i},x]$ is concave down, and therefore $\frac{1}{2}\xi_k(x)(x-x_{k,i})\le\int_{x_{k,i}}^x\xi_k(t)\,dt$. By Schwartz's inequality and~\eqref{Gauss-Jacobi}, it follows that
    \begin{align*}
      \left(\frac{1}{2}\xi_k(x)(x-x_{k,i})\right)^2&\le\left(\int_{-\infty}^\infty\frac{p_k^2(t)\,|t|^{2\sigma}e^{-st^2}}{(t-x_{k,i})^2}\,dt\right)\left(\int_{x_{k,i}}^x(t-x_{k,i})^2\,dt\right)\\
      &={p_k'}^2(x_{k,i})\,\lambda_{k,i}\,\frac{(x-x_{k,i})^3}{3}\;,
    \end{align*}
  and the result follows by Lemma~\ref{l:p k' 2(x k,i) lambda k,i}.
\end{proof}

With the notation of Lemma~\ref{l:J}, for each $k\ge C_0$, let $\widehat{I}_k$ denote the union of the oscillation intervals of $\xi_k$, and let $\widehat{J}_k\subset\widehat{I}_k$ denote the union of the corresponding subintervals $J$ defined in the proof of Lemma~\ref{l:J}. More precisely:
  \begin{itemize}

    \item if $\bar\sigma_k>0$ and $c_{\text{max}}>0$, then $\widehat{J}_k=\widehat{I}_k=(-a_k,-b_k)\cup(a_k,b_k)$;

    \item if $\bar\sigma_k<0$, then $\widehat{I}_k=(-b_k,0)\cup(0,b_k)$ and $\widehat{J}_k=(-b_k,-b_{k,+}]\cup[b_{k,+},b_k)$;

    \item if $\bar\sigma_k=0$, then $\widehat{J}_k=\widehat{I}_k=(-b_k,b_k)$.

  \end{itemize}
If $k<C_0$, we also use the notation $\widehat{J}_k=\widehat{I}_k$ for the union of the oscillation intervals, which may be empty if there are no oscillation intervals.

\begin{proof}[Proof of Theorem~\ref{t:upper estimates of xi k}]
  Part~(i) follows from Lemmas~\ref{l:J} and~\ref{l:estimates of xi k}.

  In any case, $\xi_k(x)\to0$ as $x\to\infty$. If moreover $k$ is odd or $\sigma\ge0$, then $\xi_k$ is continuous on $\R$. Thus $\xi_k^2$ is bounded and reaches its maximum at some point $\bar x\in\R$. Since $\xi_k(0)=0$ (if $\bar\sigma_k\neq0$) or $0\in\widehat{I}_k$ (if $\bar\sigma_k=0$), it follows from Lemma~\ref{l:zeros in oscillation intervals} that $\bar x\in\widehat{I}_k$. Then~(ii) follows by Corollary~\ref{c:distribution of the zeros} and Lemma~\ref{l:estimates of xi k}.

  If $k$ is even and $\sigma<0$, then $\xi_k$ is not defined at $0$ and $\xi_k^2(x)\to\infty$ as $x\to0$. So we can only conclude as above that the restriction of $\xi_k^2$ to the set defined by $|x|\ge x_{k,k/2}$ is bounded, and reaches its maximum at some point $\bar x$ of this set. Then $\bar x\in\widehat{I}_k$ by Lemma~\ref{l:zeros in oscillation intervals}, and therefore~(iii) holds by Corollary~\ref{c:distribution of the zeros} and Lemma~\ref{l:estimates of xi k}.
\end{proof}

Consider the case $\sigma<0$ and $k$ even, when
Theorem~\ref{t:upper estimates of xi k} does not provide any estimate of
$\xi_k^2$ around zero. According to Section~\ref{ss:p k}, the
function $p_k^2(x)$ on the region $|x|\le x_{k,k/2}$ reaches its
maximum at $x=0$, and moreover $p_k^2(0)<p_0^2$ by~\eqref{p k(0)}.
Hence $\phi_k^2(x)<p_0^2$ for $|x|\le x_{k,k/2}$, which complements
Theorem~\ref{t:upper estimates of xi k}-(iii). On the other hand,
$\phi_k^2(x)\le\xi_k^2(x)$ for $|x|\le1$. Moreover $x_{k,k/2}\le1$
for $k$ large enough by Corollary~\ref{c:distribution of the zeros}
since $a_k\to0$ as $k\to\infty$. So Theorem~\ref{t:upper estimates of xi k}-(iii) implies Theorem~\ref{t:upper estimates of phi k for k even and sigma<0}.

The following lemmas will be used in the proof of Theorem~\ref{t:lower estimates of max xi k^2}.
  
\begin{lem}\label{l:F}
  There exists $F>0$ so that $\xi_k(x)\le\frac{Fk^{-5/12}}{(x-b_k)^2}$ for $k\ge1$ and $x\ge b_{k+1}$.
\end{lem}

\begin{proof}
  Let $x_0\in(x_{k,1},b_k)$ such that $\xi_k'(x_0)=0$. Since $\xi_k'(x)=\int_{x_0}^x\xi_k''(t)\,dt$ and $\xi_k'(x)<0$ for $x>b_k$, we get $\int_{x_0}^xq_k(t)\xi_k(t)\,dt>0$ for $x>b_k$. Because $\xi_k(x)>0$ for $x>x_0$, $q_k(x)>0$ for $x_0<x<b_k$ and $q_k(x)<0$ for $x>b_k$, it follows that
    \begin{equation}\label{int_x_0^b_k q_k(t) xi_k(t) dt>-int_b_k^x q_k(t) xi_k(t) dt}
      \int_{x_0}^{b_k}q_k(t)\xi_k(t)\,dt>-\int_{b_k}^xq_k(t)\xi_k(t)\,dt\;.
    \end{equation}
  
  According to Corollary~\ref{c:distribution of the zeros} and Theorem~\ref{t:upper estimates of xi k}-(ii),(iii), for $k\ge C_0''$ and with $\bar C=\max\{C',C''\}$, we get
    \begin{align*}
      \int_{x_0}^{b_k}q_k(t)\xi_k(t)\,dt&\le{\bar C}^{1/2}k^{-1/12}\int_{x_0}^{b_k}q_k(t)\,dt\\
      &={\bar C}^{1/2}k^{-1/12}\biggl((2k+1+2\sigma)s(b_k-x_0)\\
      &\phantom{=\text{}}\text{}-\frac{s^2}{3}(b_k^3-x_0^3)+\bar\sigma_k(b_k^{-1}-x_0^{-1})\biggr)\\
      &\le{\bar C}^{1/2}k^{-1/12}\biggl((2k+1+2\sigma)sC_1''k^{-1/6}\\
      &\phantom{=\text{}}\text{}-\frac{s^2}{3}(b_k^3-(b_k-C_1''k^{-1/6})^3)+\frac{|\bar\sigma_k|\,C_1''k^{-1/6}}{b_k(b_k-C_1''k^{-1/6})}\biggr)\\
      &\le{\bar C}^{1/2}k^{-1/12}\biggl((2k+1+2\sigma)sC_1''k^{-1/6}\\
      &\phantom{=\text{}}\text{}-s^2\biggl(C_1''b_k^2k^{-1/6}-{C_1''}^2b_kk^{-1/3}-\frac{{C_1''}^3k^{-1/2}}{3}\biggr)\\
      &\phantom{=\text{}}\text{}+\frac{|\bar\sigma_k|\,C_1''k^{-1/6}}{b_k(b_k-C_1''k^{-1/6})}\biggr)\;.
    \end{align*}
  Since $2k+1+2\sigma-sb_k^2=\frac{\bar\sigma_k}{sb_k^2}$, there is some $F_0>0$ such that, for all $k\in\N$,
    \begin{equation}\label{F_0}
      \int_{x_0}^{b_k}q_k(t)\xi_k(t)\,dt\le F_0k^{1/12}\;.
    \end{equation}
  
  On the other hand, 
    \[
      -\int_{b_k}^xq_k(t)\xi_k(t)\,dt\ge-\xi_k(x)\int_{b_k}^xq_k(t)\,dt\;.
    \]
  With the substitution $u=t-b_k$, we get
    \[
      q_k(t)=-s^2u(u+2b_k)+\frac{\bar\sigma_k}{b_k^2}-\bar\sigma_k(u+b_k)^{-2}\;,
    \]
  giving
    \begin{align*}
      -\xi_k(x)\int_{b_k}^xq_k(t)\,dt&=\xi_k(x)\biggl(s^2\biggl(\frac{1}{3}(x-b_k)^3+b_k(x-b_k)^2\biggr)\\
        &\phantom{=\text{}}\text{}-\frac{\bar\sigma_k}{b_k^2}(x-b_k)-\bar\sigma_k(x^{-1}-b_k^{-1})\biggr)\\
        &\ge\xi_k(x)\biggl(s^2b_k(x-b_k)^2-\frac{|\bar\sigma_k|}{b_k^2}(x-b_k)-|\bar\sigma_k|\,b_k^{-1}\biggr)\\
        &\ge\xi_k(x)\biggl(\biggl(s^2b_k-\frac{|\bar\sigma_k|}{b_k^2(b_{k+1}-b_k)}\biggr)(x-b_k)^2-|\bar\sigma_k|\,b_k^{-1}\biggr)
    \end{align*}
  for $x\ge b_{k+1}$. By~\eqref{b_k+1 -b_k in O(k^-1/2)}, it follows that there is some $F_1>0$ such that
    \begin{equation}\label{F_1}
      -\int_{b_k}^xq_k(t)\xi_k(t)\,dt\ge F_1\xi_k(x)k^{1/2}(x-b_k)^2
    \end{equation}
  for all $k$ and $x\ge b_{k+1}$. Now the result follows from~\eqref{int_x_0^b_k q_k(t) xi_k(t) dt>-int_b_k^x q_k(t) xi_k(t) dt}--\eqref{F_1}.
\end{proof}

\begin{lem}\label{l:G}
  For each $\epsilon>0$, there is some $G>0$ such that, for all $k\in\N$,
    \[
      \max_{|x-x_{k,1}|\le\epsilon k^{-1/6}}\sum_{\ell=0}^{k-1}\xi_\ell^2(x)\le Gk^{1/6}\;.
    \]
\end{lem}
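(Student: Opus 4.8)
The plan is to split the sum according to whether each lower index $\ell$ puts the point $x$ inside or outside the oscillation region of $\xi_\ell$, and to control the ``outside'' terms by the tail bound just established in Lemma~\ref{l:F}. Write $\Sigma_k(x)=\sum_{\ell=0}^{k-1}\xi_\ell^2(x)$ and work in the range $|x-x_{k,1}|\le\epsilon k^{-1/6}$. Since $x_{k,1}\to\infty$ while $\epsilon k^{-1/6}\to0$ as $k\to\infty$, it suffices to establish the bound for $k\ge k_0$ with $k_0$ large, where the relevant points are far from the origin; we focus on this range. First I would locate $x$: by Corollary~\ref{c:distribution of the zeros} every point of the oscillation interval lies within $C_1''k^{-1/6}$ of a zero of $\xi_k$, so letting $x\to b_k^-$ gives $b_k-x_{k,1}\le C_1''k^{-1/6}$ (recall $x_{k,1}$ is the largest zero). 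Hence in our range $x\ge b_k-Dk^{-1/6}$ for a constant $D$ depending on $\epsilon,\sigma,s$, so $x$ is large, positive, and sits near the right turning point $b_k$ of $\xi_k$.

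Next I would split the sum at $\ell_0=k-\lceil Nk^{1/3}\rceil$, with $N$ a large constant fixed later. For the near part $\ell_0\le\ell\le k-1$ I would use the uniform estimate $\xi_\ell^2\le\bar C\ell^{-1/6}$ of Theorem~\ref{t:upper estimates of xi k}-(ii),(iii) with $\bar C=\max\{C',C''\}$; part~(iii) applies because $x$ is far larger than $x_{\ell,\ell/2}\in O(\ell^{-1/2})$. As $\ell\ge k/2$ for these indices, each term is $\le\bar C\,2^{1/6}k^{-1/6}$, and there are at most $Nk^{1/3}+1$ of them, so the near part is $O(k^{1/6})$.

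The far part $0\le\ell<\ell_0$ is the crux. Here $x>b_\ell$, so $x$ lies in the classically forbidden region of $\xi_\ell$ and I would invoke Lemma~\ref{l:F}: provided $x\ge b_{\ell+1}$, one has $\xi_\ell^2(x)\le F^2\ell^{-5/6}(x-b_\ell)^{-4}$. The spacing estimate \eqref{b_k-b_ell ge C(k-ell)k^-1/2}, namely $b_k-b_\ell\ge C(k-\ell)k^{-1/2}$, controls both the hypothesis and the denominator: for $k/2\le\ell<\ell_0$ it gives $x-b_\ell\ge C(k-\ell)k^{-1/2}-Dk^{-1/6}\ge\frac{C}{2}(k-\ell)k^{-1/2}$ once $N$ is large enough, and it also forces $x\ge b_{\ell+1}$. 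Writing $j=k-\ell$ and using $\ell^{-5/6}\le(k/2)^{-5/6}$, the contribution of $k/2\le\ell<\ell_0$ is then bounded by a constant times $k^{7/6}\sum_{j\ge Nk^{1/3}}j^{-4}=O\!\big(k^{7/6}(Nk^{1/3})^{-3}\big)=O(N^{-3}k^{1/6})$. For the remaining $1\le\ell<k/2$, where $b_\ell$ is comparatively small, the crude bound $x-b_\ell\ge\frac12 b_k\gtrsim k^{1/2}$ gives $\xi_\ell^2\lesssim\ell^{-5/6}k^{-2}$, and $\sum_{\ell<k/2}\ell^{-5/6}=O(k^{1/6})$ makes this part $O(k^{-11/6})$; the term $\ell=0$ is handled separately, since $\xi_0^2(x)=p_0^2|x|^{2\sigma}e^{-sx^2}$ is exponentially small at $x\sim k^{1/2}$.

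Combining the three pieces yields $\Sigma_k(x)\le Gk^{1/6}$. The main obstacle is the far-part estimate: one must show that the decaying tails of all the lower eigenfunctions, evaluated deep inside their forbidden regions, still total only $O(k^{1/6})$. This hinges on the precise exponent $\ell^{-5/12}$ and the quadratic denominator in Lemma~\ref{l:F}, matched against the linear root-spacing lower bound \eqref{b_k-b_ell ge C(k-ell)k^-1/2}; the balance between them is exactly what selects the cut-off scale $k^{1/3}$ and, through it, the final exponent $1/6$.
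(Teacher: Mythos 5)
Your argument is correct and rests on the same ingredients as the paper's proof --- Lemma~\ref{l:F} for the indices $\ell$ whose turning point $b_\ell$ lies well below $x$, the uniform bounds of Theorem~\ref{t:upper estimates of xi k}-(ii),(iii) for the indices near $k$, the localization $|x-b_k|\le(\epsilon+C''_1)k^{-1/6}$ from Corollary~\ref{c:distribution of the zeros}, and the spacing bound~\eqref{b_k-b_ell ge C(k-ell)k^-1/2} --- but your execution of the technical core is genuinely simpler. The paper partitions $\{0,\dots,k-1\}$ at three data-dependent indices $\ell_1<\ell_2<\ell_3$ (with $\ell_2,\ell_3$ recording where the $b_\ell$ cross $x$), bounds the tail sum by an integral of $t^{-5/6}\bigl(f_-(\ell_2)-f_+(t)\bigr)^{-4}$ which it integrates by parts four times, and then must verify via l'H\^opital that the resulting constraint on $\ell_1$ is compatible with the constraint $\ell_2^{5/6}-\ell_1^{5/6}\lesssim\ell_2^{1/6}$ coming from the middle range; the indices with $b_\ell>x$ are treated separately with the bound $C/\sqrt{q_\ell(x)}$. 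You replace all of this by one explicit cutoff at $k-\lceil Nk^{1/3}\rceil$: above it, $O(k^{1/3})$ terms each $\le\bar C(k/2)^{-1/6}$; below it, $x-b_\ell\ge\tfrac{C}{2}(k-\ell)k^{-1/2}$ turns Lemma~\ref{l:F} into $\xi_\ell^2(x)\lesssim k^{2-5/6}(k-\ell)^{-4}$, and $\sum_{j\ge Nk^{1/3}}j^{-4}\asymp N^{-3}k^{-1}$ gives $O(k^{1/6})$ with no integration by parts; the bookkeeping $2-\tfrac56-1=\tfrac16$ is exactly the balance you identify, and your check that $N$ large forces $x\ge b_{\ell+1}$ (so that Lemma~\ref{l:F} applies) is the one point that needed care. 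Two minor remarks: for $\ell<k/2$ the correct constant is $x-b_\ell\ge(1-2^{-1/2}+o(1))b_k$ rather than $\tfrac12 b_k$, which changes nothing since only $x-b_\ell\gtrsim k^{1/2}$ is used; and your explicit restriction to $k\ge k_0$ is also implicit in the paper's proof (for small $k$ and $\sigma<0$ the stated maximum can even be infinite when the interval $|x-x_{k,1}|\le\epsilon k^{-1/6}$ reaches the origin), so it is a shared caveat rather than a new gap.
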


\begin{proof}
  Take any $x\in\R$ such that $|x-x_{k,1}|\le\epsilon k^{-1/6}$. By Corollary~\ref{c:distribution of the zeros},
    \begin{equation}\label{|x-b_k| le (epsilon+C_1'')k^-1/6}
      |x-b_k|\le |x-x_{k,1}|+|x_{k,1}-b_k|\le(\epsilon+C_1'')k^{-1/6}
    \end{equation}
  for $k\ge C_0''$. In particular, $b_k<x$ if $k$ is large enough. With this assumption,  let $\ell_0,\ell_1,\ell_2\in\N$ satisfying $0<\ell_0<\ell_1<\ell_2-1$, where $\ell_0$ and $\ell_1$ will be determined later, and $\ell_2$ is the maximum of the naturals $\ell<k$ with $b_{\ell'}\le x$ for all $\ell'\le\ell$. Let $f_\pm(t)=\sqrt{2t+1+2\sigma\pm1}$ for $t\ge1$. We have
    \[
      f_\pm(\ell)-\sqrt{s}b_\ell=2\,\frac{\pm(2\ell+1+2\sigma)+1+\bar\sigma_\ell}{\bigl(2\ell+1+2\sigma\pm2-\sqrt{(2\ell+1+2\sigma)^2-4\bar\sigma_\ell}\bigr)\bigl(f_\pm(\ell)+\sqrt{s}b_\ell\bigr)}
    \]
  for $\ell\in\Z_+$. So, assuming that $k$ is large enough, we can fix $\ell_0$, independently of $k$ and $x$, so that $f_-(\ell)<\sqrt{s}b_\ell<f_+(\ell)$ for all $\ell\ge\ell_0$. We have $f_+(\ell_1)<f_-(\ell_2)$ because $\ell_1<\ell_2-1$. Moreover observe that
    $$
      f_+'(t)=(2(t+1+\sigma))^{-1/2}>0\;,\quad
      f_+''(t)=-(2(t+1+\sigma))^{-3/2}<0
    $$
  for all $t\ge1$. Then, by Lemma~\ref{l:F},
    \begin{multline*}
      \sum_{\ell=\ell_0}^{\ell_1-1}\xi_\ell^2(x)\le\sum_{\ell=\ell_0}^{\ell_1-1}\frac{F^2\ell^{-5/6}}{(x-b_\ell)^4}\le F^2\sum_{\ell=\ell_0}^{\ell_1-1}\frac{\ell^{-5/6}}{(b_{\ell_2}-b_\ell)^4}\\
      \le F^2\sqrt{s}\sum_{\ell=\ell_0}^{\ell_1-1}\frac{\ell^{-5/6}}{(f_-(\ell_2)-f_+(\ell))^4}\le F^2\sqrt{s}\int_{\ell_0}^{\ell_1}\frac{t^{-5/6}\,dt}{(f_-(\ell_2)-f_+(t))^4}\;.
    \end{multline*}
  After integrating by parts four times, we get
    \begin{multline*}
      \int_{\ell_0}^{\ell_1}\frac{t^{-5/6}\,dt}{(f_-(\ell_2)-f_+(t))^4}\le\frac{\ell_1^{-5/6}{f_+'}^{-1}(\ell_1)}{3(f_-(\ell_2)-f_+(\ell_1))^3}+\frac{5\ell_1^{-11/6}{f_+'}^{-2}(\ell_1)}{36\,(f_-(\ell_2)-f_+(\ell_1))^2}\\
      \text{}+\frac{55\,\ell_1^{-17/6}{f_+'}^{-3}(\ell_1)}{216\,(f_-(\ell_2)-f_+(\ell_1))}+\frac{935}{1296}\ell_1^{-23/6}{f_+'}^{-4}(\ell_1)\ln(f_-(\ell_2))\\
      \text{}+\frac{21505}{7776}\ln(f_-(\ell_2))\int_{\ell_0}^{\ell_1}t^{-29/6}{f_+'}^{-4}(t)\,dt\;.
    \end{multline*}
  Therefore, since $f_+'(t)\in O(t^{-1/2})$ as $t\to\infty$, there exists some $G_1>0$, independent of $k$ and $x$, such that
    \begin{multline*}
      \sum_{\ell=\ell_0}^{\ell_1-1}\xi_\ell^2(x)\le G_1\biggl(\frac{\ell_1^{-1/3}}{(f_-(\ell_2)-f_+(\ell_1))^3}+\frac{\ell_1^{-5/6}}{(f_-(\ell_2)-f_+(\ell_1))^2}\\
      \text{}+\frac{\ell_1^{-4/3}}{f_-(\ell_2)-f_+(\ell_1)}+\ell_1^{-11/6}\ln(f_-(\ell_2))+\ln(f_-(\ell_2))\biggr)\;.
    \end{multline*}
  We have $\ell_1^{-11/6}\ln(f_-(\ell_2))+\ln(f_-(\ell_2))\le\ell_2^{1/6}$ for $k$ large enough. Then $\sum_{\ell=1}^{\ell_0-1}\xi_\ell^2(x)$ has an upper bound of the type of the statement if $\ell_1$ satisfies
    \begin{equation}\label{max le ell_2^1/6}
      \max\biggl\{\frac{\ell_1^{-1/3}}{(f_-(\ell_2)-f_+(\ell_1))^3},\frac{\ell_1^{-5/6}}{(f_-(\ell_2)-f_+(\ell_1))^2},\frac{\ell_1^{-4/3}}{f_-(\ell_2)-f_+(\ell_1)}\biggr\}\le\ell_2^{1/6}\;.
    \end{equation}
    
  On the other hand, according to Theorem~\ref{t:upper estimates of xi k}-(ii),(iii),
    \[
      \sum_{\ell_1}^{\ell_2}\xi_\ell^2(x)\le\bar C\sum_{\ell_1}^{\ell_2}\ell^{-1/6}\\
      \le\bar C\int_{\ell_1}^{\ell_2}y^{-1/6}\,dy=\frac{6\bar C}{5}\,(\ell_2^{5/6}-\ell_1^{5/6})\;,
    \]
  where $\bar C=\max\{C',C''\}$. Then $\sum_{\ell=\ell_1}^{\ell_2}\xi_\ell^2(x)$ has an upper bound of the type of the statement if $\ell_2^{5/6}-\ell_1^{5/6}\le G_2\ell_2^{1/6}$ for some $G_2>0$, independent of $k$ and $x$, which is equivalent to
    \begin{equation}\label{ell_1 ge ...}
      \ell_1\ge\ell_2\bigl(1-G_2\ell_2^{-2/3}\bigr)^{6/5}\;.
    \end{equation}
  Thus we must check the compatibility of~\eqref{max le ell_2^1/6} with~\eqref{ell_1 ge ...} for some $\ell_1$ and $G_2$. By~\eqref{ell_1 ge ...} and since, for each $G_2,\delta>0$, we have $G_2\ell_2^{-2/3}\le\ell_2^{-\frac{2}{3}+\delta}$ for $k$ large enough, we can replace~\eqref{max le ell_2^1/6} with
    \begin{multline*}
      \max\biggl\{\frac{\ell_2^{-1/3}\bigl(1-\ell_2^{-\frac{2}{3}+\delta}\bigr)^{-2/5}}{(f_-(\ell_2)-f_+(\ell_1))^3},\frac{\ell_2^{-5/6}\bigl(1-\ell_2^{-\frac{2}{3}+\delta}\bigr)^{-1}}{(f_-(\ell_2)-f_+(\ell_1))^2},\\
      \frac{\ell_2^{-4/3}\bigl(1-\ell_2^{-\frac{2}{3}+\delta}\bigr)^{-8/5}}{f_-(\ell_2)-f_+(\ell_1)}\biggr\}\le\ell_2^{1/6}
    \end{multline*}
  for some $\delta>0$, which is equivalent to
    \[
      \ell_1\le\frac{1}{2}\biggl(\sqrt{2(\ell_2+\sigma)}-\ell_2^a\bigl(1-\ell_2^{-\frac{2}{3}+\delta}\bigr)^b\biggr)^2-1-\sigma
    \]
  for
    \[
      (a,b)\in\{(-1/6,-2/15),(-1/2,-1/2),(-3/2,-8/5)\}\;.
    \]
  Thus the compatibility of~\eqref{max le ell_2^1/6} with~\eqref{ell_1 ge ...} holds if there is some $G_2,\delta>0$ such that
    \[
    \ell_2\bigl(1-G_2\ell_2^{-2/3}\bigr)^{6/5}\le\frac{1}{2}\biggl(\sqrt{2(\ell_2+\sigma)}-\ell_2^a\bigl(1-\ell_2^{-\frac{2}{3}+\delta}\bigr)^b\biggr)^2-2-\sigma\;,
    \]
  which is  equivalent to
    \[
    G_2\ge\ell_2^{2/3}\biggl(1-\biggl(\frac{1}{2}\biggl(\sqrt{2(1+\sigma\ell_2^{-1})}-\ell_2^{a-\frac{1}{2}}\bigl(1-\ell_2^{-\frac{2}{3}+\delta}\bigr)^b\biggr)^2-(2+\sigma)\ell_2^{-1}\biggr)^{5/6}\biggr)\;.
    \]
  There is some $G_2>0$ satisfying this condition because the l'H\^ospital rule shows that, for $\delta$ small enough, each function
    \[
    t^{2/3}\biggl(1-\biggl(\frac{1}{2}\biggl(\sqrt{2(1+\sigma t^{-1})}-t^{a-\frac{1}{2}}\bigl(1-t^{-\frac{2}{3}+\delta}\bigr)^b\biggr)^2-(2+\sigma)t^{-1}\biggr)^{5/6}\biggr)
    \]
  is convergent in $\R$ as $t\to\infty$.
  
  Now, if $\ell_2<k-1$, let $\ell_3$ denote the minimum integer $\ell<k$ such that $b_{\ell'}>x$ for all $\ell'\ge\ell$. Also, let $\bar\sigma_{\text{\rm min/max}}$ denote the minimum/maximum values of $\bar\sigma_\ell$ for $\ell\in\N$. Then
    \begin{multline*}
      \sqrt{\frac{2(\ell_3-1)+1+2\sigma+\sqrt{(2(\ell_3-1)+1+2\sigma)^2+4\bar\sigma_{\text{\rm min}}}}{2s}}\le x\\
      <\sqrt{\frac{2(\ell_2+1)+1+2\sigma+\sqrt{(2(\ell_2+1)+1+2\sigma)^2+4\bar\sigma_{\text{\rm max}}}}{2s}}\;,
    \end{multline*}
  obtaining
    \begin{multline*}
      2(\ell_3-\ell_2)-4\\
      <\sqrt{(2(\ell_2+1)+1+2\sigma)^2+4\bar\sigma_{\text{\rm max}}}-\sqrt{(2(\ell_3-1)+1+2\sigma)^2+4\bar\sigma_{\text{\rm min}}}\;.
    \end{multline*}
  If $\ell_3>\ell_2+1$, it follows that
    \[
      (2(\ell_2+1)+1+2\sigma)^2+4\bar\sigma_{\text{\rm max}}>(2(\ell_3-1)+1+2\sigma)^2+4\bar\sigma_{\text{\rm min}}\;,
    \]
  giving
    \begin{multline*}
      2\sqrt{\bar\sigma_{\text{\rm max}}-\bar\sigma_{\text{\rm min}}}>\sqrt{(2(\ell_2+1)+1+2\sigma)^2-(2(\ell_3-1)+1+2\sigma)^2}\\
      \ge2(\ell_3-\ell_2)-4\;.
    \end{multline*}
  Therefore $\sum_{\ell=\ell_2+1}^{\ell_3}\xi^2(x)$ has an upper bound of the type of the statement by Theorem~\ref{t:upper estimates of xi k}-(ii),(iii).
  
  Let $h(t)=(2t+1+2\sigma)s-s^2x^2-\bar\sigma_{\text{\rm max}}x^{-2}$ for $t\ge0$. According to Theorem~\ref{t:upper estimates of xi k}-(i), if $\ell_3<k-1$, then
    \begin{multline*}
      \sum_{\ell=\ell_3+1}^{k-1}\xi_\ell^2(x)\le C\sum_{\ell=\ell_3+1}^{k-1}\frac{1}{\sqrt{q_\ell(x)}}\le C\sum_{\ell=\ell_3+1}^{k-1}\frac{1}{\sqrt{h(\ell)}}\le C\int_{\ell_3}^{k-1}\frac{dt}{\sqrt{h(t)}}\\
      =\frac{C}{2s}\bigl(\sqrt{h(k-1)}-\sqrt{h(\ell_3)}\bigr)\le\frac{C}{2s}\sqrt{2(k-1-\ell_3)}\;.
    \end{multline*}
  Hence $\sum_{\ell=\ell_3+1}^{k-1}\xi_\ell^2(x)$ also has an upper bound like in the statement because, by~\eqref{b_k-b_ell ge C(k-ell)k^-1/2},~\eqref{b_k+1 -b_k in O(k^-1/2)} and~\eqref{|x-b_k| le (epsilon+C_1'')k^-1/6}, there is some $G_3,G_4>0$ such that
     \[
       G_3(k-1-\ell_3)k^{-1/2}\le b_{k-1}-b_{\ell_3}\le b_{k-1}-x\le G_4k^{-1/6} \;.\qed
     \]
\renewcommand{\qed}{}
\end{proof}

\begin{proof}[Proof of Theorem~\ref{t:lower estimates of max xi k^2}]
  By~\eqref{Gauss-Jacobi},
    \[
      1=\int_{-\infty}^\infty\left(\frac{p_k(x)}{x-x_{k,1}}\right)^2\frac{|x|^{2\sigma}e^{-sx^2}}{{p_k'}^2(x_{k,1})\,\lambda_{k,1}}\,dx\;.
    \]
  Thus, by~\eqref{p^2(x)} and Lemma~\ref{l:G},
    \begin{multline*}
      \int_{|x-x_{k,1}|\le\epsilon k^{-1/6}}\left(\frac{p_k(x)}{x-x_{k,1}}\right)^2\frac{|x|^{2\sigma}e^{-sx^2}}{{p_k'}^2(x_{k,1})\,\lambda_{k,1}}\,dx\\
      \le\int_{|x-x_{k,1}|\le\epsilon k^{-1/6}}\sum_{\ell=0}^{k-1}\xi_\ell^2(x)\,dx\le2\epsilon k^{-1/6}\max_{|x-x_{k,1}|\le\epsilon k^{-1/6}}\sum_{\ell=0}^{k-1}\xi_\ell^2(x)\le2\epsilon G
    \end{multline*}
  for any $\epsilon>0$. It follows that
    \begin{equation}\label{... ge frac 1 2}
      \int_{|x-x_{k,1}|\ge\epsilon k^{-1/6}}\left(\frac{p_k(x)}{x-x_{k,1}}\right)^2\frac{|x|^{2\sigma}e^{-sx^2}}{{p_k'}^2(x_{k,1})\,\lambda_{k,1}}\,dx\ge\frac{1}{2}
    \end{equation}
  when $\epsilon\le\frac{1}{4G}$, which implies part~(i) by Lemma~\ref{l:p k' 2(x k,i) lambda k,i}.
  
  When $k$ is even and $\sigma<0$, either $0<x_{k,k/2}<a_k$ or $|x_{k,k/2}-a_k|\le C''_1k^{-1/6}$ for $k$ large enough according to Corollary~\ref{c:distribution of the zeros}. Moreover $|x_{k,1}-b_k|\le C''_1k^{-1/6}$ for $k$ large enough by Corollary~\ref{c:distribution of the zeros} as well. So, by~\eqref{a_k in O(k^-1/2)} and~\eqref{s(b k-a k) 2}, there are some $C_0,C_1>0$, independent of $k$, such that
    \begin{gather*}
      x_{k,k/2}\le a_k+C''_1k^{-1/6}\le C_0k^{-1/2}\;,\\
      x_{k,1}-x_{k,k/2}\ge b_k-a_k-2C''_1k^{-1/6}=\sqrt{\frac{c_{\text{\rm max}}}{s}}-2C''_1k^{-1/6}\ge C_1k^{1/2}
    \end{gather*}
  On the other hand, by~\eqref{p k(0)}, there is some $C_2>0$, independent of $k$, such that $\xi_k^2(x)\le C_2\,|x|^{2\sigma}$ for $|x|\le x_{k,k/2}$. Therefore
    \begin{multline*}
      \int_{|x|\le x_{k,k/2}}\frac{\xi_k^2(x)\,dx}{(x-x_{k,1})^2}\le\frac{C_2}{(x_{k,1}-x_{k,k/2})^2}\int_{|x|\le x_{k,k/2}}|x|^{2\sigma}\,dx\\
      =\frac{2C_2x_{k,k/2}^{2\sigma+1}}{(2\sigma+1)(x_{k,1}-x_{k,k/2})^2}\le\frac{2C_2C_0^{2\sigma+1}}{(2\sigma+1)C_1^2}\,k^{-\frac{2\sigma+3}{2}}<\frac{2C_2C_0^{2\sigma+1}}{(2\sigma+1)C_1^2}\,k^{-1}\;.
    \end{multline*}
  This inequality and~\eqref{... ge frac 1 2} imply part~(ii).
\end{proof}



\providecommand{\bysame}{\leavevmode\hbox to3em{\hrulefill}\thinspace}
\providecommand{\MR}{\relax\ifhmode\unskip\space\fi MR }
\providecommand{\MRhref}[2]{%
  \href{http://www.ams.org/mathscinet-getitem?mr=#1}{#2}
}
\providecommand{\href}[2]{#2}

\end{document}